\documentclass[11pt,english]{amsart}
\usepackage[T1]{fontenc}
\usepackage[latin9]{inputenc}
\usepackage{amstext}
\usepackage{amsthm}
\allowdisplaybreaks

\makeatletter
\usepackage{amsmath,amsfonts,amssymb,amsthm,epsfig}
\usepackage{graphicx}
\usepackage{soul}

\usepackage{color}
\usepackage[final,colorlinks,linkcolor=blue,anchorcolor=red,citecolor=blue]{hyperref}
\def\R{\mathbb R}
\def\N{\mathbb N}

\def\S{\mathbb S}
\newcommand{\sing}{\text{\rm sing}}

\def\de{\delta}

\def\la{\lambda}

\def\om{\omega}
\def\na{\nabla}
\def\Om{\Omega}  % domains
\def\Si{\Sigma}      % Sum
\def\La{\Lambda} 
\def\e{\varepsilon}     
\newcommand{\supp}{\operatorname{supp}}
\def\cal{\mathcal}
\def\wq{\infty}
\def\pa{\partial}

\def\loc{\text{\rm loc}}

\def\Div{\mathrm{Div}}

\newcommand{\D}{{\rm d}}%  integral sign d

\newcommand{\medint}{-\kern -,375cm\int}         %  average integral
\newcommand{\medintinrigo}{-\kern -,315cm\int}
\newcommand{\wto}{\rightharpoonup}                %  weak convergence

\newcommand{\LLcorner}{\scalebox{1.5}{\ensuremath{\llcorner}}}   %need package{graphicx}

\numberwithin{equation}{section}
\newtheorem{theorem}{Theorem}[section]
\newtheorem*{theorem*}{Theorem}  %Theorems without number

\newtheorem*{conclusion*}{Conclusin}

\newtheorem*{corollary*}{Corollary}

\newtheorem{definition}[theorem]{Definition}

\newtheorem{lemma}[theorem]{Lemma}
\newtheorem*{lemma*}{Lemma}

\newtheorem*{notation*}{Notation}

\newtheorem{proposition}[theorem]{Proposition}
\newtheorem*{proposition*}{Proposition}

\newtheorem*{remark*}{Remark}

\newtheorem*{example*}{Example}                %%%%% example can not use, due to conflict with Lyx.
\theoremstyle{definition}

\def\Xint#1{\mathchoice
	{\XXint\displaystyle\textstyle{#1}}%
	{\XXint\textstyle\scriptstyle{#1}}%
	{\XXint\scriptstyle\scriptscriptstyle{#1}}%
	{\XXint\scriptscriptstyle\scriptscriptstyle{#1}}%
	\!\int}
\def\XXint#1#2#3{{\setbox0=\hbox{$#1{#2#3}{\int}$ }
		\vcenter{\hbox{$#2#3$ }}\kern-.6\wd0}}

\def\dashint{\Xint-}

\makeatother

\usepackage{babel}
\begin{document}
	\title[]{Energy identity for Intrinsic Stationary Biharmonic Mappings into Homogeneous Spaces in Supercritical Dimensions}
	
	\author[C.-Y. Guo, Y.-T. Kang,  M.-L. Liu and W.-J. Qi]{Chang-Yu Guo, Yu-Ting Kang, Ming-Lun Liu and Wen-Juan Qi}
	
	\address[Chang-Yu Guo]{Research Center for Mathematics and Interdisciplinary Sciences, Shandong University 266237,  Qingdao, P. R. China and Department of Physics and Mathematics, University of Eastern Finland, 80101, Joensuu, Finland} \email{changyu.guo@sdu.edu.cn}
	
	\address[Yu-Ting Kang]{Research Center for Mathematics and Interdisciplinary Sciences, Shandong University 266237,  Qingdao, P. R. China and and  Frontiers Science Center for Nonlinear Expectations, Ministry of Education, P. R. China}
	\email{yuting.kang@mail.sdu.edu.cn}

	\address[Ming-Lun Liu]{Research Center for Mathematics and Interdisciplinary Sciences, Shandong University 266237,  Qingdao, P. R. China and and  Frontiers Science Center for Nonlinear Expectations, Ministry of Education, P. R. China}
	\email{minglunliu2021@163.com}
	
	\address[Wen-Juan Qi]{Department of Mathematical Sciences, Tsinghua University, Beijing, 100084, P. R. China}
	\email{wenjuan.qi@mail.sdu.edu.cn}
	
	\thanks{$^*$Corresponding author: Ming-Lun Liu}
	%\thanks{*: Corresponding author}
	
	\keywords{ Energy identity, Conservation law, Intrinsic biharmonic mappings, Defect measure, Supercritical dimension}
	\thanks{ All authors are supported by the Young Scientist Program of the Ministry of Science and Technology of China (No.~2021YFA1002200), the National Science Foundation of China (No.~12671095), the Taishan Scholar Project  and the Jiangsu Provincial Scientific Research Center of Applied Mathematics under Grant No. 7707014040A}
	%\thanks{Acknowledgement: The authors would like to thank Prof. C.-Y. Guo for bringing this problem to our attention and helpful suggestions for this article.}

	\begin{abstract}
		In this paper, we consider energy identity for intrinsic stationary biharmonic maps into homogeneous spaces in supercritical dimensions, extending the corresponding result of Hornung-Moser [Anal. PDE. 2012] in critical dimension. The proof follows a similar strategy as that of Lin-Rivi\`ere  [Duke Math. J. 2002]. A key ingredient is a conservation law for intrinsic biharmonic maps into homogeneous spaces, which allow us to derive higher regularity of the map. 
	\end{abstract}
	
	\maketitle
	
	%	{\small
		%		\keywords{\noindent {\bf Keywords:} Energy identity, Conservation law, Intrinsic biharmonic mappings, Defect measure, Supercritical dimension}
		%		\smallskip
		%		\newline
		%		%\subjclass{\noindent {\bf 2020 Mathematics Subject Classification:}  35J48, 35G50, 35B65 }
		%		%\tableofcontents
		%	}
	%	\bigskip
	
	\section{Introduction}
	
	Energy identity for harmonic type maps is a fundamental problem in the study of compactness theory and singular sets for geometric variational problems. In supercritical dimensions, the problem is complicated since the energy concentration set may have positive Hausdorff dimension. The first breakthrough in this direction was achieved by Lin and Rivi\`ere  \cite{Lin-Riviere-2002} for sphere-valued stationary harmonic maps. It is therefore natural to ask whether an analogous energy identity continues to hold for higher order geometric variational problems.
	
	Biharmonic maps, as a natural fourth order extension of harmonic maps, provide one of the most important model problems in this direction. In a recent work, Guo-Wang-Xiang \cite{Guo-Wang-Xiang-2026-EI for ESbiHM} established the energy identity for sphere-valued extrinsic stationary biharmonic maps in supercritical dimensions. The biharmonic energy considered by them is the extrinsic functional
	$$
	E^{ext}(u,\Omega)
	=\int_\Omega |\Delta u|^2\,dx,
	$$
	whose definition depends on the isometric embedding of the target manifold into an Euclidean space. From a geometric point of view, we could also consider an energy that depends only on the intrinsic Riemannian geometry of the target manifold. The purpose of this work is to establish the corresponding energy identity for intrinsic stationary biharmonic maps into compact homogeneous Riemannian manifolds in supercritical dimensions.
	
	Let $\Omega\subset\mathbb R^n$ be a bounded smooth domain and let $(N,h)$ be a smooth compact Riemannian manifold without boundary. For convenience, we fix an isometric embedding $N\hookrightarrow\mathbb R^m$. Following  \cite{Moser-2008-CPDE}, we consider the intrinsic second order energy
	$$
	E(u,\Omega)
	:=\int_\Omega |\nabla Du|^2\,dx,
	$$
	where $\nabla=\nabla^u$ denotes the connection on the pull-back bundle $u^*TN$ induced by the Levi-Civita connection of $N$. The natural energy space associated with this functional is
	$$H^{2}(\Om, N)=\Big\{u\in H^{1}(\Om, \R^m):\nabla Du\in L^2(\Omega, (\R^n)^*\otimes (\R^n)^*\otimes \R^m ), u(x)\in N\text{ for a.e. }x\in \Om \Big\}.$$
	This functional $E$ is intrinsic, in the sense that it is independent of the particular isometric embedding of $N$. On the other hand, its analytic structure is more delicate. Moser \cite{Moser-2008-CPDE} introduced the above geometrically motivated Sobolev space and established the existence of minimizers, while Scheven \cite{Scheven-2009-Poincare} proved, among other results, that stationary maps in $H^2(\Omega,N)$ belong locally to $W^{2,2}\cap W^{1,4}$. Consequently, we may work in the usual $W^{2,2}$ framework.

	For a map $u\in W^{2,2}(\Omega,N)$, we write throughout this article 
	$$
	H_{\alpha\beta}(u)
	:=
	(\nabla Du)_{\alpha\beta}
	=
	\partial_{\alpha\beta}u
	+A(u)(\partial_\alpha u,\partial_\beta u),
	$$
	where $A(\cdot)(\cdot,\cdot)$ denotes the second fundamental form of $N\hookrightarrow\mathbb R^m$. A map $u$ is called a weakly intrinsic biharmonic map if it is a critical point of $E$ with respect to outer variations, i.e.
	$$
	\left.
	\frac{d}{dt}
	\right|_{t=0}
	E\bigl(\Pi_N(u+t\varphi),\Omega\bigr)
	=0
	\qquad
	\text{for all }
	\varphi\in C_0^\infty(\Omega,\mathbb R^m),
	$$
	where $\Pi_N$ denotes the nearest-point projection onto $N$. The Euler-Lagrange equation for $E(u,\Omega)$ is
	$$
	\nabla^N_{\partial_\alpha u}\Delta \partial_\alpha u+ R^N(u)(\nabla^N_{\partial_\alpha u} \partial_\beta u, \partial_\alpha u)\partial_\beta u=0,
	$$
	where $R^N$ is the Riemannian curvature on $N$; see \cite{Hornung-Moser-2012-APDE,Moser-2008-CPDE,Scheven-2009-Poincare}.
	Here and afterward repeated indices are summed from $1$ to $n$.

	Next we recall the stationary condition introduced in \cite[Definition 3.1]{Moser-2008-CPDE}. A weakly intrinsic biharmonic map $u\in W^{2,2}(\Omega,N)$ is called stationary if it is in addition critical with respect to inner variations. 
	
	\begin{definition}[Intrinsic stationary biharmonic maps]\label{def: stationary condition}
		%Let $N$ be a Riemannian manifold embedded in $\mathbb{R}^m$.
		An intrinsic biharmonic map $u\in H^2(\Omega,N)$ is called stationary for $E(u,\Omega)$ if  for all
		$\psi=(\psi^1,...,\psi^m)\in C^\infty_0(\Omega,\mathbb{R}^m)$, it holds
		\begin{equation}\label{eq: stationary condition}
			\int_{\Omega}
			\left(
			(\nabla_{\alpha}Du\cdot \nabla_{\beta}Du) \, \partial_\beta\psi^\alpha
			+
			\frac{1}{2}
			(\nabla_{\alpha}\partial_\beta u\cdot \partial_\gamma u)\,
			\partial_{\alpha\beta} \psi^\gamma
			-
			\frac{1}{4}|\nabla Du|^2 \Div\psi
			\right)dx
			=0.
		\end{equation}
	\end{definition}
	
	A crucial consequence of stationarity is the monotonicity formula established
	by Moser \cite[Theorem 3.1]{Moser-2008-CPDE}. If $u\in H^2(\Omega,N)$ is an intrinsic stationary biharmonic map and $B_R(a) \subset \Omega$, then for $X(x):=x-a$, the expression
	$$
	\begin{aligned}
		\Phi_u(a,r)
		:=&
		\frac{1}{4}r^{4-n}
		\int_{B_r(a)} |\nabla Du|^2dx
		+
		\frac{3}{4}r^{3-n}
		\int_{\partial B_r(a)} |Du|^2d\mathcal{H}^{n-1}\\
		+&
		\frac{1}{4}r^{1-n}
		\int_{\partial B_r(a)}
		\Bigl[
		(n-2)|\partial_X u|^2
		-
		2(\nabla_X\partial_X u)\cdot \partial_X u
		\Bigr]
		d\mathcal{H}^{n-1}
	\end{aligned}
	$$
	is well-defined and monotonously nondecreasing for almost every $r\in(0,R]$. More precisely, for almost all $0<r_1\leq r_2\leq R$, there holds
	\begin{equation}\label{eq: monotonic formula}
		\Phi_u(a,r_2)-\Phi_u(a,r_1)
		=
		\int_{B_{r_2}(a)\setminus B_{r_1}(a)}
		\left(
		\frac{|\nabla \partial_X u|^2}{|x-a|^{n-2}}
		+
		(n-2)\frac{|\partial_X u|^2}{|x-a|^n}
		\right)
		dx.
	\end{equation}

	The regularity theory of biharmonic maps is important in the study of energy identity. Chang-Wang-Yang \cite{Chang-W-Y-1999} initiated the study of regularity theory of sphere-valued (i.e., $N=\S^k$) weakly biharmonic maps and established their smoothness
	when $n=4$; see also the work of P. Strzelecki \cite{Strzelecki-2003} with a different method. Soon after that, Wang developed a regularity theory of biharmonic maps into general compact Riemannian manifolds in a series of papers \cite{Wang-2004-CV, Wang-2004-MZ, Wang-2004-CPAM} via the method of Coulomb moving frames. In the critical dimension $n=4$, Lamm and Rivi\`ere \cite{Lamm-Riviere-2008} introduced a class of more general fourth order critical elliptic systems, including biharmonic maps, and then proved the continuity of weak solutions via the conservation law approach initially developed in the celebrated work of Rivi\`ere \cite{Riviere-2007}. Exploring this method, Guo and Xiang \cite{Guo-Xiang-2019-Boundary} proved H\"older continuity of weak solutions, and Guo-Xiang-Zheng \cite{Guo-Xiang-Zheng-2021-CVPDE} established sharp $L^p$-regularity for fourth-order elliptic systems. In supercritical dimensions $n\geq5$, Struwe \cite{Struwe-2008} established partial regularity of biharmonic maps via an alternative fourth order elliptic system, and later Guo, Wang and Xiang \cite{Guo-Wang-Xiang-2023-CVPDE} obtained optimal partial $L^p$-regularity for Struwe's system. Recently, Guo-Jiang-Xiang-Zheng \cite{Guo-J-Xiang-Zheng-2025} proved the optimal higher regularity for extrinsic biharmonic maps via the powerful method of quantitative stratification of Naber-Valtorta \cite{Naber-Val-2017-AnnMath}. This result was soon extended to intrinsic biharmonic maps in \cite{He-Xiang-Zheng-2025}.
	
	For intrinsic stationary biharmonic maps, Moser \cite{Moser-2008-CPDE} proved the following $\varepsilon$-regularity theorem: there exists a constant $\e>0$ such that every stationary intrinsic  biharmonic map $u\in H^{2}(B^n_r(x_0), N)$ with$$
	r^{4-n}\int_{B^n_r(x_0)}\left(|\nabla Du|^2+r^{-2}|Du|^2\right)dx\le \e
	$$
	satisfies $u\in C^\infty (B^n_{r/2}(x_0), N)$.
	As a direct corollary of this result, Scheven \cite{Scheven-2009-Poincare} proved that if $u$ is a stationary biharmonic map, then $\mathcal{H}^{n-4}(\sing(u))=0$, where 
	$$
	\sing(u):=\Big\{x\in \Omega: u \text{ is not continuous at any neighborhood of }x\Big\}.
	$$
	The monotonicity formula and the $\e$-regularity theorem also lead to a deep defect measure theory, extending Lin's seminal work \cite{Lin-1999-Annals} for stationary harmonic maps. More precisely, let $\{u_k\}$ be a sequence of stationary intrinsic biharmonic maps with uniformly bounded energies and suppose that
	$$
	u_k\rightharpoonup u
	\qquad\text{weakly in }W^{2,2}(\Omega,N).
	$$
	After passing to a subsequence, one has
	$$
	|\nabla Du_k|^2\,dx
	\rightharpoonup
	|\nabla Du|^2\,dx+\nu
	$$
	in the sense of weak convergence of Radon measures, where $\nu$ is a nonnegative defect measure. Moreover, there exists a countably $(n-4)$-rectifiable set $\Sigma\subset\Omega$ such that
	\begin{equation}\label{eq:defect-intro}
		\nu
		=
		\Theta_\nu^{n-4}(x)
		\mathcal H^{n-4}\lfloor\Sigma.
	\end{equation}
	%Based on the defect measure theory, He-Xiang-Zheng \cite{He-Xiang-Zheng-2025} obtained the optimal higher regularity for intrinsic biharmonic maps.

	The energy density $\Theta_\nu^{n-4}(x)$ measures the amount of intrinsic biharmonic energy lost in the concentration set. Thus, as in the harmonic map theory, the natural energy quantization problem is to characterize this density in terms of smooth bubble maps in the critical dimension. Following Sacks and Uhlenbeck's celebrated work on bubbling analysis \cite{Sacks-Uhlenbeck-1981}, energy identities for harmonic type maps in critical dimensions were established in dimension two \cite{DingTian1995,Parker1996,QingTian1997,LinWang1998} and later extended by Lin and Rivi\`ere \cite{Lin-Riviere-2002} to sphere-valued stationary harmonic maps in supercritical dimensions $n\geq3$; see also \cite{zhang-zhu-2026} for an extension of this to homogeneous manifolds $N$.

	Energy quantization for intrinsic biharmonic maps was first understood in the critical dimension $4$. In this case, the concentration set is locally finite, Hornung-Moser \cite{Hornung-Moser-2012-APDE} established an energy identity for intrinsic biharmonic maps into arbitrary compact Riemannian manifolds. Related energy identities for extrinsic biharmonic maps and extensions were obtained in \cite{Chen-Zhu-2023-SCM,Guo-Qi-Sun-Wang-2026-AMS,l-r-4,Liu-Yin-2016-MZ,Wang-Zheng-2012,Wang-Zheng-2013}. The supercritical dimensional case is substantially more difficult because the concentration set only has vanishing $\mathcal{H}^{n-4}$ Hausdorff measure. For extrinsic stationary biharmonic maps into spheres, Guo-Wang-Xiang \cite{Guo-Wang-Xiang-2026-EI for ESbiHM} recently overcame this difficulty and obtained the energy identity by adapting the strategy of Lin-Rivi\`ere \cite{Lin-Riviere-2002}. It is worth pointing out that, Naber and Valtorta \cite{Naber-V-24-arXiv} succeeded in establishing the energy identity for stationary harmonic maps into general closed manifolds $N$ via a significantly new approach. However, due to higher order nature, it seems difficult to adapt their approach to biharmonic maps into general closed manifolds. 
	
	In this article, our first main result establish the energy identity for intrinsic stationary biharmonic mappings into homogeneous target manifolds. For simplicity, we only concern the result on unit ball $B_1^n\subset \R^n$.
	\begin{theorem}\label{thm: main results}
		Let $N$ be a smooth, compact, homogeneous Riemannian manifold. Suppose $u_{k}\in H^{2}(B_1^{n},N)$ is a sequence of intrinsic stationary biharmonic mappings satisfying
		$$
		u_{k}\wto u\quad\text{weakly but not strongly in }H^{2}(B_1^{n})
		$$
		and
		$$
		|\nabla Du_{k}|^{2}dx\wto|\nabla Du|^{2}dx+\nu\quad\text{as weak convergence of Radon measures on $B_1^n$}
		$$
		with
		$$
		\nu=\Theta_{\nu}^{n-4}(x){\cal H}^{n-4}\LLcorner\Sigma
		$$
		for some $(n-4)$-rectifiable set $\Sigma\subset B_1^{n}$. Then for $\mathcal{H}^{n-4}$ a.e. $x\in\Sigma$,
		there exists an integer $l_x$ such that
		$$
		\Theta_{\nu}^{n-4}(x)=\sum_{i=1}^{l_{x}}E(\omega_{i},\R^{4}),
		$$
		where each $\omega_{i}:\R^{4}\to N$ is a nontrivial intrinsic smooth biharmonic map.
	\end{theorem}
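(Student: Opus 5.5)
The plan follows Lin--Rivi\`ere \cite{Lin-Riviere-2002}, in the fourth-order form of Guo--Wang--Xiang \cite{Guo-Wang-Xiang-2026-EI for ESbiHM}, and reduces the statement to a bubble-tree argument in the critical dimension four.

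\emph{Step 1: blow-up at a generic point.} Choose a generic point $x_0\in\Sigma$ with the following properties: $\Sigma$ has an approximate tangent plane $T=\R^{n-4}\times\{0\}$ at $x_0$; $\Theta^{n-4}_\nu(x_0)$ exists and is positive; $x_0$ is a Lebesgue point of $\Theta_\nu$; and
$$r^{4-n}\int_{B_r(x_0)}|\nabla Du|^2\to0 .$$
Rescale by $u_{k,r}(x)=u_k(x_0+rx)$ and take a diagonal sequence. We obtain stationary maps $v_k$ on $B_2$ with $v_k\rightharpoonup$ const. Their energy measures converge to $\Theta_\nu(x_0)\mathcal H^{n-4}\LLcorner T$, and by the monotonicity formula \eqref{eq: monotonic formula} they satisfy
$$\int_{B_2}|\partial_{y}v_k|^2+|\nabla\partial_{y} v_k|^2\to0\qquad\text{for all tangential directions }y\in T .$$
By Fubini and a Chebyshev-type selection, pick slices $\{y_k\}\times\R^4$ with $y_k\in B_1^{n-4}$ on which three things hold. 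First, the tangential energy on the slice tends to $0$. Second, the restriction $w_k(z)=v_k(y_k,z)$ carries energy converging to $\Theta_\nu(x_0)$ on $B^4_1$. Third, the slice-localized version of the monotonicity quantities is controlled. The maps $w_k$ satisfy a four-dimensional intrinsic biharmonic system with an error term built from the tangential derivatives $\partial_y v_k$ and $\partial_y^2 v_k$, which tends to $0$ in an appropriate norm.

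\emph{Step 2: bubble extraction.} Apply the standard four-dimensional bubble decomposition to $w_k$. Moser's $\varepsilon$-regularity handles the regular regions. Since the errors vanish, the rescaled limits at concentration points are nontrivial intrinsic biharmonic maps $\omega_i:\R^4\to N$, smooth by removable singularity \cite{Hornung-Moser-2012-APDE}. Each bubble carries energy at least $\varepsilon$, so there are finitely many. What remains is to show there is \emph{no energy on the necks} $A_k=B_{\delta}\setminus B_{R\lambda_k}$ for $w_k$, uniformly as $\delta\to0$ and $R\to\infty$.

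\emph{Step 3: neck analysis (main obstacle).}
\begin{itemize}
\item \emph{$L^{2,\infty}$ smallness.} On each dyadic annulus the energy is small. Arguing with monotonicity and $\varepsilon$-regularity in the full $n$-dimensional setting, as Lin--Rivi\`ere do, we get
$$|D^2 v_k|+|x|^{-1}|Dv_k|\le \varepsilon\,|x|^{-2}$$
on the necks. This gives smallness of $\nabla Du_k$ in the Lorentz space $L^{2,\infty}$ and of $Du_k$ in $L^{4,\infty}$.
\item \emph{$L^{2,1}$ control via the conservation law.} Since $N$ is homogeneous, Killing fields $\{X_j\}$ span $TN$. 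By Noether's theorem the Euler--Lagrange equation can be written as
$$\operatorname{div}\big(V_j(u)\big)=0 ,$$
where $V_j$ is built from $\langle \Delta Du, X_j(u)\rangle$, $\langle \nabla Du,\nabla(X_j(u))\rangle$, and so on. This is the conservation law mentioned in the abstract. Combined with $L^{2,\infty}$ smallness, it yields an $L^{2,1}$ (or Hardy-space) estimate on a slightly larger neck, with the tangential errors controlled by Step 1. Compare Lin--Rivi\`ere's use of $u\wedge\nabla u$ and the $\mathcal H^1$--BMO duality.
\item \emph{Conclusion on the necks.} The duality $L^{2,1}\times L^{2,\infty}$ then bounds the neck energy by $C\varepsilon$.
\end{itemize}

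The delicate part is expressing $|\nabla Du|^2$ on the neck through the conserved quantities and $X_j$, while keeping the second-order curvature terms of the form $A(u)(Du,Du)$ under control in $L^{4,\infty}$. I would first try to prove an estimate of the form
$$\|D^2 w_k\|_{L^{2,1}(A_k)}\le C\Big(\|D^2w_k\|_{L^{2,\infty}}+\|Dw_k\|_{L^{4,\infty}}^2+o(1)\Big),$$
using Lamm--Rivi\`ere type Coulomb gauge techniques.

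Summing the bubble energies, the energy on the regular regions, and the vanishing neck energy, and letting $\delta\to0$ and $R\to\infty$, gives
$$\Theta_\nu(x_0)=\sum_{i=1}^{l_{x_0}}E(\omega_i,\R^4).$$
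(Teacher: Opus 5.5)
Your overall architecture is the paper's: blow-up at a generic point, vanishing tangential energy from monotonicity, slicing, bubbles, $L^{(2,1)}$--$L^{(2,\infty)}$ duality on necks, and a Killing-field conservation law. The gap is the step you flag as delicate, which has no working mechanism, and that is where the paper's key idea lies.

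\textbf{The $L^{(2,1)}$ control.} You try to bound $D^2w_k$ in $L^{(2,1)}$ directly on the four-dimensional neck, even with a small right-hand side, from a sliced conservation law. This runs into three problems.
\begin{itemize}
\item On a slice, the $\R^n$-divergence leaves terms such as $\partial_y\partial_\alpha(H_{\alpha y}\cdot X_j)$. These are fourth-order in $v_k$ and are not controlled by $\|\nabla\partial_y v_k\|_{L^2}\to0$.
\item Likewise, the sliced equation carries errors such as $\Delta_y^2v_k$ and $\Delta_y\Delta_zv_k$, not only terms in $\partial_yv_k,\partial_y^2v_k$.
\item Your proposed inequality is essentially the conclusion itself. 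A neck profile $|D^2w_k|\approx\e|x|^{-2}$ has $L^{(2,\infty)}$ norm of order $\e$ but $L^{(2,1)}$ norm of order $\e\log(\de/R\la_k)$. Ruling such profiles out is exactly the neck analysis, and a Coulomb gauge on a long annulus does not by itself do it.
\end{itemize}
The paper needs only a \emph{uniform bound} in $L^{(2,1)}$, because smallness comes entirely from $L^{(2,\infty)}$. It proves the $n$-dimensional estimate $\|D^3u\|_{L^{(4/3,1)}}\le C\|u\|_{H^2}$ (Proposition~\ref{prop:higher regularity}) by Hodge-decomposing $(H_{\alpha\beta}(u)\cdot X_k(u))_\alpha$:
\begin{itemize}
\item the curl part is lower order by the curvature identity $\partial_i(H_{j\beta}\cdot X_k)-\partial_j(H_{i\beta}\cdot X_k)=R^N(\partial_iu,\partial_ju)\partial_\beta u\cdot X_k+\cdots$;
\item the divergence part is handled by \eqref{eq: homo conservation}.
\end{itemize}
Fubini and the four-dimensional embedding $W^{1,(4/3,1)}\hookrightarrow L^{(2,1)}$ then give $\|D^2u_k(X_1,\cdot)\|_{L^{(2,1)}(B^4)}\le C\Lambda$ for most $X_1$. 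No neck-localized or gauge estimate is needed.

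\textbf{Slice energy.} Fubini plus Chebyshev cannot produce slices whose energy tends to $\Theta^{n-4}_\nu(x_0)$. You only know that the averages $\int_{B_r^{n-4}}F_k^0$ converge to $\Theta_\nu^{n-4}\mathcal{H}^{n-4}(B_r^{n-4})$, which still allows $F_k^0$ to oscillate. The Reduction Lemma~\ref{lem: main lemma} uses stationarity to write $\partial_{a_j}F_k^\eta=\Div_aG_k^\eta+T_k^\eta$, with $G_k^\eta,T_k^\eta\to0$ in $L^1$ uniformly in $\eta$. This needs $\|\partial_ju_k\|_{L^4}\to0$ from a Morrey--Sobolev interpolation together with the $W^{3,(4/3,1)}$ bound. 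It then applies Allard's strong constancy lemma.

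\textbf{Bubble extraction on slices.} This cannot be run on the slice maps $w_k$. Moser's $\e$-regularity concerns $n$-dimensional stationary maps, not slices, and the slice error terms are not controlled at small scales. The paper performs every blow-up in $n$ dimensions (Theorem~\ref{thm:first bubble}, Proposition~\ref{prop: key-estimate-1}). It chooses $X_1^k$ by a maximal-function argument so that \eqref{eq: X-2} holds at all scales. This makes the limits independent of $X_1$, hence genuine bubbles on $\R^4$. It also gives smallness of the scaled energy on $B_\rho^{n-4}\times(B_{2\rho}^4\setminus B_\rho^4)$ for all $\rho\in(\de_kR,1/4)$, from which the $L^{(2,\infty)}$ smallness on the neck follows via $\e$-regularity.
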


	In Theorem \ref{thm: main results}, we may equivalently write
	$$
	\Theta_{\nu}^{n-4}(x)=\sum_{i=1}^{l_{x}}E(\bar{\omega}_{i},\S^{4})\qquad\text{for }{\cal H}^{n-4}\text{-a.e. }x\in\Si,
	$$
	where each $\bar{\omega}_i:\S^{4}\to\S^{m-1}$ is a nontrivial intrinsic biharmonic map defined on $\S^4$.
	
	The general strategy for the proof of Theorem \ref{thm: main results} is similar to that used by Guo-Wang-Xiang \cite{Guo-Wang-Xiang-2026-EI for ESbiHM} for extrinsic biharmonic maps, whereas the idea dates back to Lin-Rivi\'ere \cite{Lin-Riviere-2002} for harmonic maps. It consists of the following four main steps:
	\begin{enumerate}
		\item Construct the first nontrivial bubble by applying defect measure theory.
		
		\item $W^{3,(4/3,1)}$-estimate for $u_k$.
		
		\item A reduction lemma so that we may reduce the energy identity from supercritical dimension to the critical dimension $4$.
		
		\item $L^{(2,\infty)}$ estimate on the neck domain so that we may apply the Lorentz duality to conclude vanishing energy on neck domains.
	\end{enumerate}
	Comparing with the extrinsic biharmonic case \cite{Guo-Wang-Xiang-2026-EI for ESbiHM}, the main difference lies in Steps 1 and 2. In step 1, we shall follow the approach used by Lin \cite{Lin-1999-Annals} for stationary harmonic maps, where regularity theory for intrinsic stationary biharmonic maps developed in \cite{Moser-2008-CPDE,Scheven-2008-ACV,Scheven-2009-Poincare} turns out to be useful. To achieve $W^{3,(4/3,1)}$-estimate, we follow the idea of H\'elein \cite{Helein-1991} to rewrite the biharmonic map equation in divergence form, using Killing vector fields. 
	
	%Our second main result establishes a useful conservation law for intrinsic biharmonic maps when the target manifold is homogeneous. This structure will play an important role in the proof of the energy identity. 
	%Let $N$ be a smooth compact homogeneous Riemannian manifold. Then there exists a Lie group acting transitively on $N$ by isometries, and hence $N$ admits sufficiently many Killing vector fields

	%Thanks to this decomposition and the Killing equations, we obtain the following conservation form of the intrinsic biharmonic mapping equation.

	\begin{theorem}\label{thm: homo conservation law}
		Let $N$ be a smooth, compact, homogeneous Riemannian manifold embedded in $\mathbb{R}^m$. Let $X_1,\ldots,X_M$ and $Y_1,\ldots,Y_M$ be as in Lemma \ref{lemma: homo space Killing}, and let $u \in H^2(\Omega,N)$ be a critical point of $E$ in the sense that
		$$
		\left.\frac{d}{dt}\right|_{t=0}
		E\bigl(\Pi_N(u+t\phi)\bigr)=0
		\qquad
		\text{for all } \phi \in C^\infty_0(\Omega,\mathbb{R}^m).
		$$
		Then $u$ satisfies the conservation law
		\begin{equation}\label{eq: homo conservation}
			\begin{aligned}
				\sum_{\alpha,\beta=1}^n
				\partial_\beta\partial_\alpha(H_{\alpha\beta}(u)\cdot Z(u)) = 2\sum_{\alpha,\beta=1}^n
				\partial_\beta(H_{\alpha\beta}(u)\cdot \nabla^N_{\partial_\alpha u} Z(u))
			\end{aligned}
		\end{equation}
		for all smooth Killing fields $Z$ on $N$.
	\end{theorem}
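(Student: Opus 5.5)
The plan is to derive \eqref{eq: homo conservation} from the first variation of $E$ along the one-parameter family generated by the isometric flow of the Killing field $Z$, in the spirit of H\'elein. Let $\Phi_t$ be the flow of $Z$; each $\Phi_t$ is an isometry of $N$. Fix a scalar test function $\eta\in C^\infty_0(\Omega)$ and set $u_t(x):=\Phi_{t\eta(x)}(u(x))$. Then $u_t\in H^2(\Omega,N)$, and $\frac{d}{dt}\big|_{t=0}u_t=\eta\,Z(u)$, which is tangent to $N$. Since $\Pi_N(u+t\eta Z(u))$ has the same first-order variation field, criticality under outer variations (by density, also for $W^{2,2}\cap W^{1,4}$ test fields, using Scheven's regularity) gives $\frac{d}{dt}\big|_{t=0}E(u_t)=0$. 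The general first variation formula reads
\[
\frac{d}{dt}\Big|_{t=0}E(u_t)=2\int_\Omega H_{\alpha\beta}(u)\cdot \nabla_\alpha\nabla_\beta V\,dx \;+\;\text{curvature terms},
\]
where $V$ is the variation field. Rather than use this formula directly, I will compute the variation with $V=\eta Z(u)$ by hand, exploiting the isometry property.

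The key computation runs as follows. Write $D_\beta u_t=d\Phi_{t\eta}(\partial_\beta u)+t\,\partial_\beta\eta\, Z(u_t)$. Since $\Phi_{t\eta}$ is an isometry and commutes with covariant derivatives of pulled-back quantities up to the terms created by $x$-dependence of $\eta$, we get
\[
\nabla Du_t = d\Phi_{t\eta}(\nabla Du)+t\,(\text{terms involving }\partial\eta,\ \partial^2\eta)+O(t^2).
\]
The first term has the same norm as $\nabla Du$, so to first order only the $\eta$-derivative terms survive:
\[
\frac{d}{dt}\Big|_{t=0}|\nabla Du_t|^2=2H_{\alpha\beta}\cdot\Big(\partial_{\alpha\beta}\eta\, Z(u)+\partial_\beta\eta\,\nabla_{\partial_\alpha u}Z(u)+\partial_\alpha\eta\,\nabla_{\partial_\beta u}Z(u)\Big).
\]
The two first-order terms combine by the symmetry $H_{\alpha\beta}=H_{\beta\alpha}$. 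The Killing property enters here, because $Z$ generates isometries, $\nabla Z$ is skew, and no zeroth-order-in-$\eta$ term appears. Hence
\[
0=\int_\Omega H_{\alpha\beta}\cdot Z(u)\,\partial_{\alpha\beta}\eta+2H_{\alpha\beta}\cdot\nabla^N_{\partial_\alpha u}Z(u)\,\partial_\beta\eta\,dx
\]
for all $\eta$, which is exactly \eqref{eq: homo conservation} in the distributional sense.

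The main obstacle is justifying the expansion rigorously. One route is to check the derivative of $d\Phi_{s}$ in $s$, namely $\frac{d}{ds}d\Phi_s(v)=\nabla_vZ$ along the flow, and the commutation of $\nabla$ with the isometry, in the embedded picture: $\nabla Du=\partial^2u+A(Du,Du)$, and isometries preserve $A$ only intrinsically. I would do the computation intrinsically in local coordinates on $N$ and then transfer. An alternative is to start from the Euler--Lagrange equation, test it against $\eta Z(u)$, and use the Killing identities $\langle\nabla_XZ,Y\rangle=-\langle\nabla_YZ,X\rangle$ and $\nabla^2Z=R(Z,\cdot)\cdot$ to cancel the curvature term $R^N(\nabla\partial u,\partial u)\partial u\cdot Z$. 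I expect this cancellation (which uses the second Killing identity) to be the delicate point. The integrability $H\in L^2$, $Du\in L^4$, $Z(u)\in W^{2,2}$ ensures all terms are in $L^1$.

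Finally, Lemma \ref{lemma: homo space Killing} is not needed for the identity itself, since it holds for every Killing field. The lemma only serves later to express $u$ and its derivatives through the $X_i,Y_i$.
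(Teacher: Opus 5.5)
Your proof is correct, but it takes a different and more direct route than the paper. You run Noether's argument with the actual symmetry. For $u_t=\Phi_{t\eta}\circ u$ one has, exactly,
\[
H_{\alpha\beta}(u_t)=d\Phi_{t\eta}\bigl(H_{\alpha\beta}(u)\bigr)+t\Bigl(\partial_{\alpha\beta}\eta\,Z(u_t)+\partial_\alpha\eta\,\nabla^N_{d\Phi_{t\eta}(\partial_\beta u)}Z+\partial_\beta\eta\,\nabla^N_{\partial_\alpha u_t}Z\Bigr).
\]
This holds because an isometry preserves the Levi-Civita connection and $\frac{D}{ds}d\Phi_s(v)=\nabla^N_{d\Phi_s(v)}Z$. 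The zeroth-order term is then harmless because $|d\Phi_{t\eta}(H)|=|H|$, and the conservation law follows at once.

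The paper never uses the flow. It stays with the outer variation $\Pi_N(u+t\varphi Z(u))$ and extrinsic scalar calculus:
\begin{enumerate}
\item It rewrites the energy as $\sum_{k,l}G_{kl}(u)(H_{\alpha\beta}\cdot X_k)(H_{\alpha\beta}\cdot X_l)$ using the frame of Lemma \ref{lemma: homo space Killing}.
\item It uses that, for a Killing field $X$, the quantity $H_{\alpha\beta}(u)\cdot X(u)$ is the symmetrized derivative of $\partial_\beta u\cdot X(u)$. So its $t$-derivative is an explicit derivative of $(\partial_\beta\varphi)(Z\cdot X_l)(u)+\varphi\,\partial_\beta u\cdot[Z,X_l](u)$.
\item It cancels the zeroth-order terms with Lemma \ref{lemma:21}, which encodes the $Z$-invariance of the metric in that frame, and finishes with Lemma \ref{lemma: Killing another formula}.
\end{enumerate}

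Your route is shorter and frame-free. As you note, it shows the identity holds for every Killing field on any closed $N$; homogeneity is needed only later, for the spanning property used in Proposition \ref{prop:higher regularity}. The paper's route avoids the flow and the fact that isometries are affine for $\nabla^N$, at the cost of needing the spanning Killing frame just to set up the computation.

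The one extra step you owe is the passage from the hypothesis (outer variations) to your flow variation. This step is sound: $\frac{D}{dt}\big|_{t=0}H_{\alpha\beta}(u_t)$ equals $\nabla_\alpha\nabla_\beta V$ plus a curvature term $R^N(V,\partial_\alpha u)\partial_\beta u$, so the first variation depends only on $V=\eta Z(u)$. Approximating the non-smooth field $\eta Z(u)$ by smooth test fields is needed in both proofs; the paper also inserts $\varphi Z(u)$ into $\Pi_N$ without comment.
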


	%The use of Killing fields for intrinsically biharmonic maps goes back to Hornung-Moser \cite{Hornung-Moser-2012-ACV}. 
	Theorem \ref{thm: homo conservation law} may be viewed as a biharmonic analogue of the conservation law of harmonic maps into homogeneous target by H\'elein \cite{Helein-1991}. For sphere-valued harmonic maps, this was first discovered independently by Chen \cite{Chen-89} and Shatah \cite{Shatah-88}. Later, Wang \cite{Wang-2004-CV} extended this conservation law for biharmonic maps into spheres (note however that his definition of intrinsic biharmonic maps are different from ours). 
	
	The equation \eqref{eq: homo conservation}, together with the fact that the Killing fields span the tangent spaces of $N$, gives a system whose divergence and curl structures are suitable for Hodge decomposition and Lorentz space estimates. We shall explore this fact in Section 3.2 below to achieve higher regularity of stationary intrinsic biharmonic maps.

	To conclude the introduction, let us point out that there is another intrinsic energy functional defined by $$
	\tilde{E}(u,\Omega)=\int_\Omega|\tau(u)|^2dx=\int_\Omega|\mathrm{trace}(\nabla Du)|^2dx.
	$$ 
	However, as pointed out by Moser in \cite[Last paragraph in Section 4]{Moser-2008-CPDE}, one cannot generally expect critical points of $\tilde{E}(u,\Omega)$ to satisfy a monotonicity formula unless the target manifold $N$ has nonpositive sectional curvature. Thus it remains as an open problem whether one can prove Theorem \ref{thm: main results} for stationary points of $\tilde{E}$. 

	\section{Conservation law}
	
	%In this section, we prove Theorem \ref{thm: homo conservation law}. 
	
	\subsection{Homogeneous spaces}

	First of all, we review some basic definitions and properties for homogeneous spaces.
	
	\begin{definition}
		A \emph{left action} of a group $G$ on a manifold $\mathcal{N}$ is a smooth map
		\[
		\lambda : G \times \mathcal{N} \to \mathcal{N}
		\]
		such that
		\[
		\lambda(e,x)=x \quad \text{and} \quad \lambda(ab,x)=\lambda(a,\lambda(b,x))
		\]
		for all $a,b\in G$ and $x\in \mathcal{N}$.
		
		An action is called \emph{transitive} if for any $x,y\in \mathcal{N}$, there exists $a\in G$ such that $\lambda(a,x)=y$.
	\end{definition}
	
	For simplicity, we denote $\lambda(a,x)$ by $a\cdot x$ or $ax$ if there is no confusion. For $x\in \mathcal{N}$, the \emph{isotropy group} at $x$ is defined by
	\[
	G_x := \{a\in G : a\cdot x = x\}.
	\]
	
	%\begin{proposition}
	Let $G\times \mathcal{N}\to \mathcal{N}$ be a transitive action of a Lie group $G$ on a manifold $\mathcal{N}$, and let $K=G_x$ be the isotropy subgroup of a point $x\in \mathcal{N}$. Then
	\begin{enumerate}%[(a)]
		\item the subgroup $K$ is a closed subgroup of $G$;
		\item the natural map $j:G/K\to \mathcal{N}$ given by
		\[
		j(aK)=a\cdot x
		\]
		is a diffeomorphism;
		\item the dimension of $G/K$ is $\dim G-\dim K$.
	\end{enumerate}
	%\end{proposition}
	
	\begin{definition}
		A \emph{homogeneous space} is a manifold $\mathcal{M}$ with a transitive action of a Lie group $G$. Equivalently, it is a manifold of the form $G/K$, where $G$ is a Lie group and $K$ is a closed subgroup of $G$.
	\end{definition}
	
	If $(N,h)$ is a Riemannian manifold, we may consider the isometry group of $N$, denoted by $I(N)$, which is a Lie group.
	
	\begin{definition}
		A \emph{Riemannian homogeneous space} is a Riemannian manifold $(N,h)$ on which its isometry group $I(N)$ acts transitively.
	\end{definition}
	
	%\begin{proposition}
	Let $N$ be a Riemannian homogeneous space. Then the isotropy subgroup of a given point is a compact subgroup of $I(N)$.
	%\end{proposition}
	Hence, a Riemannian homogeneous space $N$ is diffeomorphic to a homogeneous space $G/K$, where $G=I(N)$ and $K$ is the isotropy subgroup of a point.
	
	From now on, $(N,h)$ will be a closed Riemannian manifold on which a Lie group $G$ of dimension $M$ acts transitively by isometries. Let $X$ be a Killing tangent vector field on $N$. Then we have the following useful lemma.
	\begin{lemma}[{\cite[Lemma 2]{Helein-1991}}]\label{lemma: homo space Killing}
		There exist smooth Killing fields $X_1,...,X_M$ on $N$ and smooth tangent vector fields $Y_1,...,Y_M$ on $N$ such that, for all $p\in N$ and all tangent vectors $V\in T_pN$ we have
		\[
		V=\sum_{k=1}^{M}\bigl(V\cdot X_k(p)\bigr)Y_k(p).
		\]
	\end{lemma}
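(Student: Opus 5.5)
The plan is to build the Killing fields $X_k$ from the infinitesimal action of $G$, and then to obtain the $Y_k$ by inverting a pointwise Gram-type operator.

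\textbf{Step 1 (choice of $X_k$).} Let $\mathfrak g$ be the Lie algebra of $G$, with $\dim\mathfrak g=M$, and fix a basis $e_1,\dots,e_M$. Define
\[
X_k(p):=\left.\frac{d}{dt}\right|_{t=0}\exp(te_k)\cdot p ,\qquad p\in N .
\]
Each $X_k$ is smooth, since the action is smooth. Each $X_k$ is Killing, because its flow $p\mapsto \exp(te_k)\cdot p$ consists of isometries of $(N,h)$.

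\textbf{Step 2 (the $X_k(p)$ span $T_pN$).} This is the step I expect to carry the real content. Fix $p$ and consider the orbit map $\pi_p:G\to N$, $\pi_p(a)=a\cdot p$. Its differential at the identity is $\xi\mapsto \xi_N(p)$, whose image is $\mathrm{span}\{X_k(p)\}$.

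By equivariance, $\pi_p(ab)=a\cdot\pi_p(b)$, and each $a\cdot$ is a diffeomorphism of $N$. Hence $\pi_p$ has constant rank on $G$.

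The map $\pi_p$ is surjective by transitivity. If its rank were less than $\dim N$, then its image would have measure zero by Sard's theorem (equivalently, by the constant rank theorem together with second countability of $G$). This contradicts surjectivity, so $d\pi_p(e)$ is onto $T_pN$. Therefore $X_1(p),\dots,X_M(p)$ span $T_pN$ for every $p\in N$.

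\textbf{Step 3 (construction of $Y_k$).} For $p\in N$ define the endomorphism $L_p:T_pN\to T_pN$ by
\[
L_pV:=\sum_{k=1}^M (V\cdot X_k(p))\,X_k(p).
\]
This operator is symmetric with respect to $h$, and it satisfies
\[
\langle L_pV,V\rangle=\sum_k (V\cdot X_k(p))^2 .
\]
The right-hand side vanishes only if $V\perp X_k(p)$ for all $k$, which by Step 2 forces $V=0$. So $L_p$ is positive definite and hence invertible.

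Since $p\mapsto L_p$ is a smooth section of $\mathrm{End}(TN)$, its inverse $p\mapsto L_p^{-1}$ is also smooth. We then set
\[
Y_k(p):=L_p^{-1}X_k(p),
\]
which is a smooth tangent vector field on $N$.

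\textbf{Step 4 (verification).} For $V\in T_pN$, linearity of $L_p^{-1}$ gives
\[
\sum_k (V\cdot X_k(p))\,Y_k(p)=L_p^{-1}\Bigl(\sum_k (V\cdot X_k(p))X_k(p)\Bigr)=L_p^{-1}L_pV=V,
\]
which is the identity claimed in the lemma.

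Steps 1, 3 and 4 are routine linear algebra and smoothness checks. The only nontrivial input is the spanning property in Step 2, which uses transitivity of the action through the constant-rank and Sard argument.
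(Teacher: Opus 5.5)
Your proof is correct. The paper gives no argument of its own for this lemma and simply cites Hélein. Your construction is the standard proof behind that citation: the fundamental vector fields of a basis of $\mathfrak g$, the spanning property from transitivity via the equivariant constant-rank and Sard argument, and $Y_k=L_p^{-1}X_k$ as a smooth pseudo-inverse.
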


	Denote by $X_1, \ldots, X_M$ and $Y_1, \ldots, Y_M$ the vector fields on $N$ obtained in the conclusion of Lemma \ref{lemma: homo space Killing}. Define the functions $G_{kl}: N \to \mathbb{R}$ by setting
	\[
	G_{kl}(p) = Y_k(p)\cdot Y_l(p).
	\]
	Then we have the following two elementary identities. 
	\begin{lemma}[{\cite[Lemma 2.1]{Hornung-Moser-2012-ACV}}]\label{lemma:21}
		Let $p \in N$, let $V \in T_p N$ and $Z$ be a smooth Killing field on $N$. Then
		\[
		\frac{1}{2}\sum_{k,l = 1}^{M}(D_Z G_{kl})(p)(V\cdot X_k(p))(V\cdot X_l(p))
		= -\sum_{k = 1}^{M}(V\cdot Y_k(p)) V\cdot [Z,X_k](p).
		\]
	\end{lemma}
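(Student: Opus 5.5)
The plan is to differentiate the length of a tangent vector along the flow of the Killing field $Z$, and to use the reproducing identity of Lemma \ref{lemma: homo space Killing} to express that length through the functions $G_{kl}$.

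First I rewrite $|V|^2$ in terms of $G_{kl}$. Applying Lemma \ref{lemma: homo space Killing} to $V$ gives $V=\sum_k (V\cdot X_k(p))Y_k(p)$. Taking the inner product with itself yields, for every $p\in N$ and $V\in T_pN$,
\[
|V|^2=\sum_{k,l=1}^M G_{kl}(p)\,(V\cdot X_k(p))(V\cdot X_l(p)).
\]
Pairing the same expansion with $Y_k(p)$ gives the auxiliary identity
\[
\sum_{l}G_{kl}(p)\,(V\cdot X_l(p))=V\cdot Y_k(p).
\]

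Next I move the vector along the flow. Let $\varphi_t$ be the flow of $Z$; since $N$ is compact it is global, and since $Z$ is Killing each $\varphi_t$ is an isometry. Set $p_t=\varphi_t(p)$ and $V_t=d\varphi_t(V)\in T_{p_t}N$. Then $|V_t|^2=|V|^2$ for all $t$. Applying the first identity at $p_t$ gives
\[
|V|^2=\sum_{k,l}G_{kl}(p_t)\,(V_t\cdot X_k(p_t))(V_t\cdot X_l(p_t)).
\]
Because $\varphi_t$ is an isometry, $V_t\cdot X_k(p_t)=V\cdot (\varphi_t^*X_k)(p)$, where $\varphi_t^*X_k=d\varphi_{-t}\circ X_k\circ\varphi_t$. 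The $t$-derivative of this pullback at $t=0$ is the Lie derivative $\mathcal L_ZX_k=[Z,X_k]$. Hence
\[
\frac{d}{dt}\Big|_{t=0}\bigl(V_t\cdot X_k(p_t)\bigr)=V\cdot[Z,X_k](p),
\qquad
\frac{d}{dt}\Big|_{t=0}G_{kl}(p_t)=(D_ZG_{kl})(p).
\]

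Finally I differentiate the displayed expression for $|V|^2$ at $t=0$, using the symmetry $G_{kl}=G_{lk}$. This gives
\[
0=\sum_{k,l}(D_ZG_{kl})(p)(V\cdot X_k)(V\cdot X_l)+2\sum_{k,l}G_{kl}(p)(V\cdot X_l)\,V\cdot[Z,X_k](p).
\]
By the auxiliary identity, the inner sum over $l$ in the second term equals $V\cdot Y_k(p)$. Dividing by $2$ and rearranging yields exactly the claimed formula.

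The only point needing care is the sign convention for the derivative of the pullback, namely that $\frac{d}{dt}\varphi_t^*X=[Z,X]$ rather than $-[Z,X]$. I would verify it via the standard identity $\frac{d}{dt}\big|_{0}\,d\varphi_{-t}X(\varphi_t p)=\mathcal L_ZX$ before finalizing. Everything else is a routine computation.
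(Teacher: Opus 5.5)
Your proof is correct and complete. The paper gives no argument of its own for this lemma (it is quoted from Hornung--Moser), so there is no in-text proof to match; your flow argument works as a self-contained proof.

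\textbf{The sign you flagged.} Your convention is the right one. With $(\varphi_t^*X)(p)=d\varphi_{-t}\bigl(X(\varphi_t p)\bigr)$ one has $\frac{d}{dt}\big|_{t=0}\varphi_t^*X=\mathcal L_ZX=[Z,X]$. Here $[Z,X]=\nabla_ZX-\nabla_XZ$, which is the convention the paper uses in the proof of Theorem \ref{thm: homo conservation law}. You can also check it without Lie derivatives. Viewing $V_t$ as a field along $t\mapsto p_t$, the symmetry lemma gives $\frac{D}{dt}V_t\big|_{t=0}=\nabla_VZ$, so
\[
\frac{d}{dt}\Big|_{t=0}\bigl(V_t\cdot X_k(p_t)\bigr)=\nabla_VZ\cdot X_k(p)+V\cdot\nabla_ZX_k(p)=V\cdot\bigl(\nabla_ZX_k-\nabla_{X_k}Z\bigr)(p)=V\cdot[Z,X_k](p),
\]
using the Killing equation for $Z$. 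This is the same computation the paper performs for $\frac{\partial}{\partial t}(\partial_\beta u_t\cdot X_l(u_t))$. Also, only a local flow is needed, so compactness plays no real role.

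\textbf{A pointwise alternative.} Covariantly differentiate $W=\sum_k(W\cdot X_k)Y_k$ along $Z$, for any tangent field $W$ with $W(p)=V$. This gives
\[
\sum_k(V\cdot\nabla_ZX_k)(V\cdot Y_k)+\sum_k(V\cdot X_k)(V\cdot\nabla_ZY_k)=0.
\]
Combine this with $D_ZG_{kl}=\nabla_ZY_k\cdot Y_l+Y_k\cdot\nabla_ZY_l$ and your auxiliary identity. The result is the claimed formula with $\nabla_ZX_k$ in place of $[Z,X_k]$. The difference term is $\sum_k(V\cdot Y_k)(V\cdot\nabla_{X_k}Z)=V\cdot\nabla_VZ$. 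It vanishes by the dual expansion $V=\sum_k(V\cdot Y_k)X_k$ (Lemma \ref{lemma: Killing another formula}, applied on $T_pN$) together with the Killing equation.

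\textbf{Comparison.} Your version absorbs both of these ingredients into the single fact that $\varphi_t$ is an isometry, which makes it shorter and more conceptual. The pointwise version avoids flows and Lie derivatives entirely and makes visible exactly where the Killing property enters. In both, only $Z$ needs to be Killing; the Killing property of the $X_k$ is never used.
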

	
%	\begin{lemma}[{\cite[Lemma 2.2]{Hornung-Moser-2012-ACV}}]\label{lemma:22}
%		Assume $u\in H^2(\Omega,N)$, let $Z$ be a smooth Killing field on $N$. Then
%		\[
%		\Div(\nabla u\cdot Z(u))=\tau(u)\cdot Z(u)=\Delta u\cdot Z(u).
%		\]
%	\end{lemma}

	In our later proof, we shall use the following simple fact from  linear algebra.
	\begin{lemma}\label{lemma: Killing another formula}
		If there exist vectors $X_1,...,X_M,Y_1,...,Y_M\in \R^m$ such that for any vector $V\in \R^m$
		%\begin{equation}\label{eq: Killing XY}
		$$V=\sum_{k=1}^{M}(V\cdot X_k) Y_k,$$
		%\end{equation}
		then
		%\begin{equation}\label{eq: Killing YX}
		$$	V=\sum_{k=1}^{M}(V\cdot Y_k) X_k.$$
		%\end{equation} 
	\end{lemma}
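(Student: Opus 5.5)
The plan is to recast the hypothesis as an identity between linear maps on $\R^m$ and then take transposes. First, I will introduce the $m\times m$ matrix
$$
T:=\sum_{k=1}^{M} Y_k X_k^{\top},
$$
where $X_k,Y_k$ are regarded as column vectors. Then for every $V\in\R^m$,
$$
TV=\sum_{k=1}^{M} Y_k\,(X_k^{\top}V)=\sum_{k=1}^{M}(V\cdot X_k)\,Y_k .
$$
The hypothesis therefore says exactly that $TV=V$ for all $V\in\R^m$. Since this holds for every vector, in particular for the standard basis vectors, it gives the matrix identity $T=I_m$.

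Next, I will transpose this identity. Since $I_m^{\top}=I_m$, we get
$$
T^{\top}=\sum_{k=1}^{M} X_k Y_k^{\top}=I_m .
$$
Applying $T^{\top}$ to an arbitrary $V\in\R^m$ then yields
$$
V=T^{\top}V=\sum_{k=1}^{M} X_k\,(Y_k^{\top}V)=\sum_{k=1}^{M}(V\cdot Y_k)\,X_k,
$$
which is the claimed formula.

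There is essentially no obstacle; the only point needing care is that the hypothesis must hold for all $V$ in the whole space $\R^m$, so that $T$ is the identity on all of $\R^m$. If it held only on a subspace, such as $T_pN$ in Lemma \ref{lemma: homo space Killing}, then $T$ would merely restrict to the identity on that subspace, and transposition would not directly give the conclusion. In the later application I would therefore either use the lemma with the full-space hypothesis as stated, or first compose with the orthogonal projection onto $T_pN$ to reduce to that setting.
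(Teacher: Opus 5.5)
Your proof is correct and is exactly the paper's argument. The paper writes the hypothesis as $YX^{T}=E_{m\times m}$ with $X=(X_1,\dots,X_M)$ and $Y=(Y_1,\dots,Y_M)$, which is your identity $\sum_k Y_kX_k^{\top}=I_m$, and then transposes to get $V=XY^{T}V=\sum_k (V\cdot Y_k)X_k$. Your closing caveat is about the later tangent-space use of the lemma, not this full-space statement, and it causes no harm there: since the $X_k,Y_k$ are tangent, $\sum_k Y_kX_k^{\top}$ is then the symmetric orthogonal projection onto $T_pN$, so the same transposition argument still works.
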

	
	\begin{proof}
		Define matrix $X=(X_1,...,X_M)_{m\times M}$ and $Y=(Y_1,...,Y_M)_{m\times M}$. Then for any $V\in\R^m$, $$
		V=YX^TV.
		$$
		This implies $YX^T=E_{m\times m}$, the identity matrix. Thus, $$
		V= E_{m\times m}V= (YX^T)^TV= XY^TV.
		$$
	\end{proof}
	
	\subsection{Proof of Theorem \ref{thm: homo conservation law}}
	\begin{proof}[Proof of Theorem \ref{thm: homo conservation law}]
		
		For every $p\in N$ and $V\in T_pN$, define the function $G_{kl}:N\to\R$ as $$
		G_{kl}(p)=Y_k(p)\cdot Y_l(p).
		$$
		Then
		%\begin{equation}\label{eq: V inner product}
		\[
		\begin{aligned}
			|V|^2=&\left(\sum_{k=1}^M (V\cdot X_k(p))Y_k(p)\right)\cdot \left(\sum_{l=1}^M (V\cdot X_l(p))Y_l(p)\right)\\
			=&\sum_{k,l=1}^{M} G_{kl}(p)(V\cdot X_k(p))(V\cdot X_l(p)).
		\end{aligned}
		\]
		%\end{equation}
		Hence the energy functional becomes 
		%\begin{equation}\label{eq: energy rewrite}
		\[
		\int_{\Omega}
		\sum_{\alpha,\beta=1}^n\sum_{k,l=1}^M
		G_{kl}(u)
		(H_{\alpha\beta}(u)\cdot X_k(u))
		(H_{\alpha\beta}(u)\cdot X_l(u))\,dx,
		\]
		%\end{equation}
		where $H_{\alpha\beta}(u)=\partial_{\alpha\beta}u+A(u)(\partial_\alpha u,\partial_\beta u)\in T_uN$. Moreover, Lemma \ref{lemma:21} gives 
		\begin{equation}\label{eq: HornungMoser 2.1}
			\frac12\sum_{k,l=1}^M D_ZG_{kl}(p)(V\cdot X_k(p))(V\cdot X_l(p))
			=-\sum_{k=1}^M(V\cdot Y_k(p))\left(V\cdot[Z,X_k](p)\right).
		\end{equation}

		For each Killing field $X_k$, the product rule gives both
		\begin{equation*}
			\begin{aligned}\label{eq: Killing-01}
				\partial_\alpha
				(\partial_\beta u\cdot X_k(u))&=(\partial_{\alpha\beta}u)\cdot X_k(u)
				+\partial_\beta u\cdot D_{\partial_\alpha u}X_k\\
				&=H_{\alpha\beta}(u)\cdot X_k(u)
				+\partial_\beta u\cdot\nabla_{\partial_\alpha u}^NX_k.
			\end{aligned}
		\end{equation*}
		and
		\begin{equation*}\label{eq: Killing-02}
			\partial_\beta
			(\partial_\alpha u\cdot X_k(u))
			=H_{\beta\alpha}(u)\cdot X_k(u)
			+\partial_\alpha u\cdot\nabla_{\partial_\beta u}^NX_k.
		\end{equation*}
		Using $H_{\alpha\beta}=H_{\beta\alpha}$ and the Killing equation for $X_k$, we obtain
		\begin{equation}\label{eq: Hab symmetric}
			H_{\alpha\beta}(u)\cdot X_k(u)
			=\frac12\left[
			\partial_\alpha(\partial_\beta u\cdot X_k(u))
			+\partial_\beta(\partial_\alpha u\cdot X_k(u))
			\right].
		\end{equation}

		Fix a smooth Killing field $Z$ on $N$ and
		$\varphi\in C_0^\infty(\Omega)$, define the variation
		%\begin{equation}\label{eq: variation}
		$$u_t=\Pi_N(u+t\varphi Z(u)).$$
		%\end{equation}
		Then
		$$\frac{\partial u_t}{\partial t}\bigg|_{t=0}
		=\varphi Z(u),
		$$
		$$
		\begin{aligned}
			\frac{\partial}{\partial t}\bigg|_{t=0}\left(\partial_\beta u_t\right)=&(\partial_\beta\varphi)Z(u)+\varphi D_{\partial_\beta u}^NZ\\
			=&(\partial_\beta\varphi)Z(u)+\varphi \nabla^N_{\partial_\beta u}Z- \varphi A(u)(\partial_\beta u,Z).
		\end{aligned}
		$$
		Therefore
		%\begin{equation}\label{eq: variation-01}
		\[
		\begin{aligned}
			\frac{\partial}{\partial t}\bigg|_{t=0}\big(\partial_\beta u_t\cdot X_l(u_t)\big)
			&=(\partial_\beta\varphi)(Z\cdot X_l)(u)
			+\varphi X_l(u)\cdot\nabla_{\partial_\beta u}^NZ
			+\varphi\partial_\beta u\cdot\nabla_Z^NX_l\\
			&= (\partial_\beta\varphi)(Z\cdot X_l)(u)
			+\varphi\partial_\beta u\cdot[Z,X_l](u),
		\end{aligned}
		\]
		%\end{equation}
		where in the last equality we used the Killing equation for $Z$
		$$
		X_l\cdot\nabla_{\partial_\beta u}^NZ
		=-\partial_\beta u\cdot\nabla_{X_l}^NZ.
		$$
		Applying \eqref{eq: Hab symmetric} to $u_t$ and differentiating at $t=0$ gives
		\begin{equation}\label{eq: variation-02}
			\begin{aligned}
				\frac{\partial}{\partial t}\bigg|_{t=0}
				[H_{\alpha\beta}(u_t)\cdot X_l(u_t)]
				=&\frac12\partial_\alpha\left[(\partial_\beta\varphi)(Z\cdot X_l)(u)+\varphi\partial_\beta u\cdot[Z,X_l](u)\right]\\
				&+\frac12\partial_\beta\left[(\partial_\alpha\varphi)(Z\cdot X_l)(u)+\varphi\partial_\alpha u\cdot[Z,X_l](u)\right].
			\end{aligned}
		\end{equation}

		Now, differentiating \eqref{eq: HornungMoser 2.1} with respect to $t$ at $t=0$, we obtain three terms:
		\[
		\frac{1}{2}\left.\frac{d}{dt}\right|_{t=0}E(u_{t}) = I_{1} + I_{2} + I_{3},
		\]
		where
		\[
		I_{1} = \frac{1}{2}\int_{\Omega}\varphi \sum_{\alpha,\beta=1}^n\sum_{k,l=1}^M D_{Z}G_{kl}(u)(H_{\alpha\beta}(u)\cdot X_{k}(u))(H_{\alpha\beta}(u)\cdot X_{l}(u))dx
		\]
		comes from the variation of $G_{kl}(u_{t})$, and
		\[
		I_{2} = \frac{1}{2}\int_{\Omega}\sum_{\alpha,\beta=1}^n\sum_{k,l=1}^M G_{kl}(u)(H_{\alpha\beta}\cdot X_{k})\frac{\partial}{\partial t}\bigg|_{t=0}[H_{\alpha\beta}(u_t)\cdot X_{l}(u_t)]dx,
		\]
		\[
		I_{3} = \frac{1}{2}\int_{\Omega}\sum_{\alpha,\beta=1}^n\sum_{k,l=1}^M G_{kl}(u)(H_{\alpha\beta}\cdot X_{l})\frac{\partial}{\partial t}\bigg|_{t=0}[H_{\alpha\beta}(u_t)\cdot X_{k}(u_t)]dx
		\]
		come from the variations of the two factors $H_{\alpha\beta}\cdot X_{k}$ and $H_{\alpha\beta}\cdot X_{l}$. By symmetry, $I_{2}=I_{3}$. Thus
		\[
		\frac{1}{2}\left.\frac{d}{dt}\right|_{t=0}E(u_{t}) = I_{1} + 2I_{2}.
		\]
		
		Using \eqref{eq: HornungMoser 2.1}, we compute
		\begin{equation}\label{eq:I1}
			I_{1} = -\int_{\Omega}\varphi \sum_{\alpha,\beta=1}^n\sum_{l=1}^M(H_{\alpha\beta}(u)\cdot Y_{k}(u))(H_{\alpha\beta}(u)\cdot [Z,X_{l}](u))dx. \tag{A}
		\end{equation}
		
		Note that 
		$$
		H_{\alpha\beta}(u)\cdot Y_l(u)=\sum_{k=1}^{M}G_{kl}(u)
		(H_{\alpha\beta}(u)\cdot X_k(u)).
		$$
		Using this and \eqref{eq: variation-02}, we obtain
		\[
		\begin{aligned}
			2I_{2}
			=& \int_{\Omega}\sum_{\alpha,\beta=1}^n\sum_{l=1}^M
			(H_{\alpha\beta}(u)\cdot Y_l(u))
			\partial_\alpha\left[(\partial_\beta\varphi)(Z\cdot X_l)(u)\right]dx\\
			&+ \int_{\Omega}\sum_{\alpha,\beta=1}^n\sum_{l=1}^M
			(H_{\alpha\beta}(u)\cdot Y_l(u))
			(\partial_\beta u\cdot[Z,X_l](u))(\partial_\alpha\varphi)dx\\
			&+ \int_{\Omega}\varphi\sum_{\alpha,\beta=1}^n\sum_{l=1}^M
			(H_{\alpha\beta}(u)\cdot Y_l(u))
			\partial_\alpha\left[\partial_\beta u\cdot[Z,X_l](u)\right]dx.
		\end{aligned}
		\]
		
		Since $[Z,X_{l}]$ is a Killing field (as both $Z$ and $X_{l}$ are Killing), we can apply \eqref{eq: Hab symmetric} to obtain
		\[
		H_{\alpha\beta}(u)\cdot [Z,X_{l}](u) = \frac{1}{2}[\partial_{\alpha}(\partial_{\beta}u\cdot [Z,X_{l}](u)) + \partial_{\beta}(\partial_{\alpha}u\cdot [Z,X_{l}](u))].
		\]
		Using the symmetry of $H_{\alpha\beta}\cdot Y_{l}$ in $\alpha,\beta$, we have
		\[
		\sum_{\alpha,\beta}(H_{\alpha\beta}(u)\cdot Y_{l}(u))\partial_{\alpha}[\partial_{\beta}u\cdot [Z,X_{l}](u)]
		= \sum_{\alpha,\beta}(H_{\alpha\beta}(u)\cdot Y_{l}(u))(H_{\alpha\beta}(u)\cdot [Z,X_{l}](u)).
		\]
		Thus we have 
		%\begin{equation}\label{eq: variation-04}
		\[
		\begin{aligned}
			&\int_{\Omega}\varphi\sum_{\alpha,\beta=1}^n\sum_{l=1}^M
			(H_{\alpha\beta}(u)\cdot Y_l(u))
			\partial_\alpha\left[\partial_\beta u\cdot[Z,X_l](u)\right]dx\\
			=& \int_{\Omega}\varphi\sum_{\alpha,\beta=1}^n\sum_{l=1}^M
			(H_{\alpha\beta}(u)\cdot Y_l(u))
			(H_{\alpha\beta}(u)\cdot[Z,X_l](u))dx,
		\end{aligned}
		\]
		%\end{equation}
		which cancels \eqref{eq:I1}.
		
		Note also that by Lemma \ref{lemma: Killing another formula}, we have 
		\[
		\sum_{l=1}^M(H_{\alpha\beta}(u)\cdot Y_{l}(u))(Z\cdot X_{l})(u) = H_{\alpha\beta}(u)\cdot Z(u).
		\]
		Therefore, we conclude 
		%\begin{equation}\label{eq: variation-05}
		\[
		\begin{aligned}
			\frac12\frac{d}{dt}\bigg|_{t=0}E(u_t)
			=&\int_{\Omega}\sum_{\alpha,\beta=1}^n\sum_{l=1}^M
			(H_{\alpha\beta}(u)\cdot Y_l(u))
			\partial_\alpha\left[(\partial_\beta\varphi)(Z\cdot X_l)(u)\right]dx\\
			&+ \int_{\Omega}\sum_{\alpha,\beta=1}^n\sum_{l=1}^M
			(H_{\alpha\beta}(u)\cdot Y_l(u))
			(\partial_\beta u\cdot[Z,X_l](u))(\partial_\alpha\varphi)dx\\
			=& \int_{\Omega}\sum_{\alpha,\beta=1}^n
			(H_{\alpha\beta}(u)\cdot Z(u))(\partial_\alpha\partial_\beta\varphi)dx\\
			&+ \int_{\Omega}\sum_{\alpha,\beta=1}^n\sum_{l=1}^M
			(H_{\alpha\beta}(u)\cdot Y_l(u))
			\bigg(\partial_\alpha(Z\cdot X_l)(u)+\partial_\alpha u\cdot[Z,X_l](u)\bigg)(\partial_\beta\varphi)dx.
		\end{aligned}
		\]
		%\end{equation}
		
		Since $Z(u)$ and $X_l(u)$ are Killing, we have
		$$
		\begin{aligned}
			&\quad\partial_\alpha(Z\cdot X_l)(u)+\partial_\alpha u\cdot[Z,X_l](u)\\
			&=X_l(u)\cdot\nabla^N_{\partial_\alpha u} Z(u)+Z(u)\cdot\nabla^N_{\partial_\alpha u} X_l(u)+\partial_\alpha u\cdot \nabla^N_Z X_l(u)-\partial_\alpha u\cdot \nabla^N_{X_l} Z(u)\\
			&= 2X_l(u)\cdot\nabla^N_{\partial_\alpha u} Z(u),
		\end{aligned}
		$$
		and thus Lemma \ref{lemma: Killing another formula} gives 
		%\begin{equation}\label{eq: variation-06}
		\[
		\begin{aligned}
			&\quad\sum_{l=1}^M
			(H_{\alpha\beta}(u)\cdot Y_l(u))
			\bigg(\partial_\alpha(Z\cdot X_l)(u)+\partial_\alpha u\cdot[Z,X_l](u)\bigg)\\
			&= 2\sum_{l=1}^M
			(H_{\alpha\beta}(u)\cdot Y_l(u))(X_l(u)\cdot\nabla^N_{\partial_\alpha u} Z(u))\\
			&= 2H_{\alpha\beta}(u)\cdot \nabla^N_{\partial_\alpha u} Z(u).
		\end{aligned}
		\]
		%\end{equation}

		Finally, integration by parts yields
		\begin{equation*}%\label{eq: variation-07}
			%\begin{aligned}
			\frac12\frac{d}{dt}\bigg|_{t=0}E(u_t)
			= \int_{\Omega}\varphi\sum_{\alpha,\beta=1}^n
			\partial_\beta\partial_\alpha(H_{\alpha\beta}(u)\cdot Z(u))dx
			- 2\int_{\Omega}\varphi\sum_{\alpha,\beta=1}^n
			\partial_\beta(H_{\alpha\beta}(u)\cdot \nabla^N_{\partial_\alpha u} Z(u))dx,
			%\end{aligned}
		\end{equation*}
		which proves \eqref{eq: homo conservation}.
	\end{proof}

	\section{Construction of the first bubble}
	
	%\subsection{Construction of the first bubble}
	%\textbf{Step 1.} \textbf{Construction of the first bubble (\cite[Section 3]{Scheven-2008-ACV}).}
	In this section, we will outline the blow-up procedure to derive the first bubble. 
	
	Under the same assumptions as in Theorem \ref{thm: main results}, after selecting a generic point $x_0$ in the energy concentration set $\Sigma$
	(such a point exists for ${\cal H}^{n-4}$ -a.e. $x\in \Sigma$)
	and considering the rescaled sequence $u_k(x_0+r_k\cdot)$ at $x_0$ with scalings $r_k\to 0$, we may assume, without loss of generality,
	that $x_0=0$ and  still denote the rescaled sequence of stationary biharmonic
	maps  as $u_{k}\in H^{2}(B_{2}^{n},N)$ with uniformly bounded  energies, that is,
	$\sup_{k\ge1}\|u_{k}\|_{H^{2}(B_{2}^{n})}\le\La$, such that
	\[
	u_{k}\wto c\quad\text{weakly in }H^{2}(B_{2}^{n})\text{ and strongly in }H^{1}(B_{2}^{n}),
	\]
	\[
	u_{k}\to c\quad\text{ in }C_{\loc}^{2}(B_{2}^{n}\backslash P)
	\]
	for some constant $c\in \mathbb{S}^{m-1}$,
	and
	\[
	|\nabla Du_{k}|^{2}dx\wto\Theta^{n-4}_{\nu}(0){\cal H}^{n-4}\LLcorner P
	\]
	as weak convergence of Radon measures, where
	\[
	P=B_{2}^{n-4}\times\{0\}.
	\]
	
	Our main result of this section is the following existence theorem. 
	\begin{theorem}[Construction of first bubble]\label{thm:first bubble}
		If $\nu\neq 0$, then there exists a nonconstant smooth intrinsic biharmonic map $v\colon \R^4\to N$.  	
	\end{theorem}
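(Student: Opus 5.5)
We follow Lin's blow-up argument for stationary harmonic maps, adapted to the fourth order setting with the regularity theory of Moser and Scheven. Since $\nu=\Theta^{n-4}_\nu(0)\mathcal H^{n-4}\LLcorner P\neq0$, the density $\theta_0:=\Theta^{n-4}_\nu(0)$ is positive. It is at least $\e$, the constant in Moser's $\e$-regularity theorem, because otherwise $u_k$ would converge smoothly near $P$.

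\textbf{Step 1: choose the concentration scale.} For each $k$ and each $x\in B_1^{n-4}\times\{0\}$, let $r_k(x)$ be the smallest radius $r$ with
$$\Psi_k(x,r):=r^{4-n}\int_{B_r(x)}\left(|\nabla Du_k|^2+r^{-2}|Du_k|^2\right)dy\ge \e/2,$$
where $\e$ is the constant of Moser's $\e$-regularity theorem. Energy concentration along $P$ and the $C^2_{\loc}$ convergence away from $P$ give $r_k(x)\to0$. Using the monotonicity formula \eqref{eq: monotonic formula}, together with a Fubini/covering argument along the $(n-4)$-dimensional directions of $P$ (as in Lin), we pick points $x_k\in P$ and radii $r_k=r_k(x_k)\to0$ with two properties:
\begin{itemize}
\item[(a)] $\sup_{y\in B_{R}(0),\, \rho\le 1}\Psi_k(x_k+r_k y, r_k\rho)\le \e/2$ for every fixed $R$, so that no point near $x_k$ concentrates at a smaller scale;
\item[(b)] the monotonicity defect on $B_{\delta}(x_k)\setminus B_{r_kR}(x_k)$ tends to zero,
$$\int_{B_\delta(x_k)\setminus B_{r_kR}(x_k)}\left(\frac{|\nabla\partial_X u_k|^2}{|x-x_k|^{n-2}}+\frac{|\partial_X u_k|^2}{|x-x_k|^{n}}\right)dx\to0,$$
which holds because $\Phi_{u_k}(x,\cdot)$ is almost constant on $(r_kR,\delta)$ for $\mathcal H^{n-4}$-most $x\in P$. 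This uses that the limit density is $\theta_0$ at every point of $P$ and at every small scale.
\end{itemize}

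\textbf{Step 2: rescale and pass to a limit.} Set $v_k(y)=u_k(x_k+r_ky)$. The maps $v_k$ are stationary intrinsic biharmonic maps on $B_{1/r_k}$ with uniformly bounded scaled energy, by monotonicity. By (a) and $\e$-regularity, $v_k$ is bounded in $C^l_{\loc}(\R^n)$ for every $l$, hence $v_k\to v$ in $C^l_{\loc}$. The limit $v$ is a smooth intrinsic biharmonic map on $\R^n$, and $\Psi_v(0,1)=\e/2$, so $v$ is nonconstant. Since the $C^l_{\loc}$ convergence is strong, $v$ is also stationary.

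\textbf{Step 3: reduce to $\R^4$ and identify the main obstacle.} We must show that $v$ is independent of the $n-4$ directions along $P$ and that it has finite energy on $\R^4$. Two sub-steps do this.
\begin{itemize}
\item[(i)] For each $e_j$ tangent to $P$, the rescaled energy of $\partial_{j}v_k$ should vanish in the limit. Following Lin, we use (b) at many base points $x_k+r_k z$, $z\in\R^{n-4}$, not only at $x_k$. Comparing the radial vector fields $X=x-a$ based at two different points $a$ shows that $\partial_j v$ is controlled by the radial derivatives, which vanish in the limit. Hence $\partial_j v\equiv0$ for $j\le n-4$, and $v(y',y'')=\omega(y'')$ with $\omega:\R^4\to N$ smooth and nonconstant.
\item[(ii)] The energy bound $E(\omega,\R^4)\lesssim\theta_0<\infty$ follows from the monotone quantity $\Phi$ after splitting off the $\R^{n-4}$ directions. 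The function $\omega$ is intrinsic biharmonic on $\R^4$ because the Euler--Lagrange equation is invariant under translation.
\end{itemize}

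The main obstacle is sub-step (i), especially choosing $x_k$ and $r_k$ so that (a) and (b) hold simultaneously. The fourth order monotonicity identity contains boundary terms such as $(\nabla_X\partial_Xu)\cdot\partial_Xu$, so the Lin-type translation argument must control $|\nabla\partial_X u|$ and $|\partial_X u|$ in integrated form. If this translation argument does not close, we will try instead Scheven's approach in \cite{Scheven-2008-ACV}. That approach first produces a tangent map that is homogeneous of degree $0$ and invariant along $P$, and then blows up at a regular point of its cross-section.
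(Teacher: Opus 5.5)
Your proposal has a genuine gap: property (b) is false as stated, and Step 3(i) depends on it. The flatness of $\nu=\theta_0\mathcal H^{n-4}\LLcorner P$ controls $\Phi_{u_k}(x,r)$ only at radii $r$ that are fixed independently of $k$. At such radii, $\Phi_{u_k}(x,r)\to\frac14\theta_0\omega_{n-4}$ for a.e.\ $r$, independently of $r$, so the monotonicity defect on $B_R(x)\setminus B_r(x)$ tends to $0$. It says nothing at the $k$-dependent radius $r_kR\to0$.

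Suppose $v_k\to v$ smoothly, as you claim in Step 2. Then $\Phi_{u_k}(x_k,r_kR)=\Phi_{v_k}(0,R)\to\Phi_v(0,R)$, and monotonicity gives $\Phi_v(0,R')\le\frac14\theta_0\omega_{n-4}$ for all $R'$. So if the defect on $B_\delta(x_k)\setminus B_{r_kR}(x_k)$ tended to $0$ for some fixed $R$, then $\Phi_v(0,\cdot)$ would be constant on $[R,\infty)$. Hence $\partial_Y v=0$ for $|y|>R$. For $v(y',y'')=\omega(y'')$, choosing $|y'|>R$ gives $y''\cdot D\omega(y'')=0$ for every $y''\in\R^4$. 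So $\omega$ is constant, which contradicts nonconstancy.

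If you instead mean $\lim_{R\to\infty}\limsup_k$, that is essentially the single-bubble, no-neck statement the whole paper is after. Moreover, in the rescaled picture it only carries information near infinity, so it cannot give $\partial_j v\equiv0$ on compact sets.

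Property (a) is also unsupported. Your $r_k(\cdot)$ is defined only on $P$, so nothing prevents concentration at scales $\ll r_k$ at points $x_k+r_ky$ with $y$ transversal to $P$. A point-picking over all of $B_1^n$ would enforce (a), but it would lose the special choice of $x_k$ needed for tangential invariance. The fallback to Scheven's approach is only named, not carried out.

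The paper obtains tangential invariance and the smallness needed for $\e$-regularity by a different mechanism:
\begin{itemize}
\item \emph{Fixed-scale tangential estimate.} It applies monotonicity only at fixed scales, at the base points $0$ and $e_j\in P$. Writing $\partial_ju_k=x\cdot Du_k-(x-e_j)\cdot Du_k$ as a difference of two radial derivatives, and using the Morrey bound on small balls, it gets $\int_{B_1}|\nabla\partial_ju_k|^2\to0$ (Claim 1).
\item \emph{Scale invariance.} It chooses the slice $X_1^k$ where the Hardy--Littlewood maximal function of $f_k(X_1)=\sum_{j\le n-4}\int_{B^4_{1/2}}|\nabla_jDu_k|^2\,dX_2$ is small. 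Then the tangential energy vanishes at every scale around $X_1^k$ and survives any rescaling.
\item \emph{Transversal points.} It fixes $\delta_k$ by maximizing the energy of $B^{n-4}_{\delta}(X_1^k)\times B^4_\delta(X_2)$ over all transversal $X_2$. This controls the transversal points on that slice.
\item \emph{Neighboring slices.} It transports this smallness to nearby slices using the stationarity identity with $\psi=\phi(\cdot-a)e_j$, which bounds $|\partial_{a_j}F_k(a)|$ by the vanishing tangential energy.
\end{itemize}
Only after these steps does $\e$-regularity give $C^4_{\loc}$ convergence to a nonconstant $v$ with $\nabla_jDv\equiv0$ for $j\le n-4$. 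To repair your argument, replace (b) by the fixed-scale estimate plus maximal-function selection, and replace (a) by transversal maximization plus stationarity transport.
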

	
	%By \cite[Lemma 6.6]{Scheven-2009-Poincare}, there exists a sequence of intrinsic stationary biharmonic maps $u_k\in H^2(B_4^n,N)$ with uniformly bounded energies, i.e. $\sup_{k\ge1}\|u_k\|_{H^2(B_4^n)}\le \Lambda$, such that
	%$$
	%u_{k}\wto c\quad\text{weakly in }H^{2}(B_{4}^{n})\text{ and strongly in }H^{1}(B_{4}^{n}),
	%$$
	%$$
	%u_{k}\to c\quad\text{ in }C_{\loc}^{2}(B_{4}^{n}\backslash P),
	%$$
	%$$
	%|\nabla Du_{k}|^{2}dx\wto\Theta^{n-4}_{\nu}(0){\cal H}^{n-4}\LLcorner P\quad\text{ as weak convergence of Radon measures},
	%$$
	%where $c\in N$ is a constant, $P=B_{4}^{n-4}\times\{0\}$. 
	
	\begin{proof}
		
		By \cite[Corollary 3.2]{Moser-2008-CPDE}, we can further assume that
		\begin{equation}\label{eq: uniform morrey norm}
			\sup_{0<r<1,x\in B_{1}^{n}}r^{4-n}\int_{B_{r}^{n}}(|\nabla Du_{k}|^{2}+r^{-2}|Du_{k}|^{2})dx\le C,\quad\forall\,k\ge1,
		\end{equation}
		where $C=C(n,\La)>0$ is a universal constant.
		
		To be clear, we write $x=(X_1, X_2)$, where  $X_{1}=(x_{1},\cdots,x_{n-4})$ and $X_{2}=(x_{n-3},\cdots,x_{n})$. 
		\smallskip 
		
		\noindent\textbf{Claim 1.} For each $j=1,...,n-4$, 
		\begin{equation}\label{eq: Claim 1 eq}
			\int_{B_1^{n}}|\na_j Du_{k}|^{2}\,dx+\int_{B_1^{n}}|Du_{k}|^{2}\,dx\to0.
		\end{equation}
		
		For $j=0,1,...,n$, let $x^0:=0$, $x^j:=e_j$ be the standard basis of $\R^n$. Since $u_k\to c$ strongly in $W^{1,2}$, we have $\Phi_{u_k}(a,r)\to r^{4-n}\nu(B_r(a))$. Thus, by the monotonic formula \eqref{eq: monotonic formula}, for each $j=1,...,n-4$, it holds
		\begin{equation}\label{eq: Claim 1-01}
			\begin{aligned}
				\int_{B_{R}(x^j)\setminus B_{r}(x^j)}
				\frac{|\nabla \partial_X u_k|^2}{|x-x^j|^{n-2}}\le\,& \Phi_{u_k}(x^j,R)-\Phi_{u_k}(x^j,r)\\
				\xrightarrow{k\to\infty}\,& R^{4-n}\nu(B_R(x^j))-r^{4-n}\nu(B_r(x^j))
				=\,0.
			\end{aligned}
		\end{equation}
		Furthermore, by \eqref{eq: uniform morrey norm}, we have 
		$$
		\int_{B_{r}(x^j)}
		|\nabla \partial_X u_k|^2 \le C\int_{B_{r}(x^j)}(r^2|\nabla Du_{k}|^{2}+|Du_{k}|^{2})dx\le Cr^{n-2}.
		$$
		Combined with \eqref{eq: Claim 1-01}, this implies
		\begin{equation}\label{eq: Claim 1-02}
			\int_{B_R(x^j)} |\nabla \partial_X u_k|^2\to0
		\end{equation}
		as $k\to\infty$ for $0<R\le 2$ and $j=1,...,n-4$. By the identity $$
		\partial_ju_k(x)=x^j\cdot Du_k(x)=x\cdot Du_k-(x-x^j)\cdot Du_k=:\partial_{n_0}u_k-\partial_{n_j}u_k,
		$$we have the estimate$$
		\begin{aligned}
			\int_{B_1(0)}|\nabla \partial_ju_k|^2=&\int_{B_1(0)}|\nabla (x^j\cdot Du_k)|^2\\
			%	\le&2\int_{B_1(0)}|\nabla\partial_{n_0}u_k|^2+2\int_{B_1(0)}|\nabla\partial_{n_j}u_k|^2\\
			\le& 2\int_{B_1(x^0)}|\nabla\partial_{X}u_k|^2+2\int_{B_2(x^j)}|\nabla\partial_{X}u_k|^2\to0
		\end{aligned}
		$$
		as $k\to\infty$. This proves Claim 1 since $
		\nabla_{j}Du_k=\nabla \partial_ju_k
		$ and $Du_k\to 0$ strongly in $L^2$.
		~\\
		
		\noindent\textbf{Claim 2.} For all $k\in\N$ sufficiently large, there exist 
		$X_1^k\in B_{1/2}^{n-4}(0)$ and $\delta_k\to 0$ such that
		\begin{equation}\label{eq: Claim 2 eq}
			\max_{X_2\in B_{1/2}^{4}(0)}
			\delta_k^{4-n}\int_{B_{\delta_k}^{n-4}(X_1^k)\times B_{\delta_k}^{4}(X_2)}\left(|\nabla Du_k|^2+\delta_k^{-2}|Du_k|^2\right)dX_1dX_2=\frac{\varepsilon}{c(n)}.
		\end{equation}
		Moreover, the maximal is achieved at some $X_2^k\in B_{1/4}^{4}(0)$.
		
		To prove claim 2, we define a function $f_k:B^{n-4}_{1/2}(0)\to\R$ by $$
		f_k(X_1)=\sum_{j=1}^{n-4}\int_{B^4_{1/2}(0)}|\nabla_j Du_k|^2\,dX_2.
		$$
		Then $\|f_k\|_{L^1(B^{n-4}_{1/2}(0))}\to0$ according to Claim 1. The $\varepsilon$-regularity theory implies that $u_k$ is smooth in a neighborhood of $\{X_1\}\times B^4_{1/2}(0)$ for $\mathcal{H}^{n-4}$ a.e. $X_1\in B^{n-4}_{1/2}(0)$. We may select a sequence $\{X_1^k\}_{k=1}^\infty$ of such points. By the weak $L^1$ estimate for the Hardy-Littlewood maximal function
		$$
		Mf_k(X_1)=\sup_{r>0}r^{4-n}\int_{B_r^{n-4}(X_1)}f_k,
		$$
		we have$$
		\mathcal{H}^{n-4}\{x\in B^{n-4}_{1/2}(0): Mf_k>\|f_k\|_{L^1}^{\frac12}\}\le C\frac{\|f_k\|_{L^1}}{\|f_k\|_{L^1}^{\frac12}}\xrightarrow{k\to \infty}0.
		$$
		Then there exists a set $E\subset B^{n-4}_{1/2}(0)$ with positive $\mathcal{H}^{n-4}$ measure such that
		\begin{equation}\label{eq: Claim 2-00}
			\sup_{r>0} r^{4-n}\int_{B^{n-4}_{r}(X_1^k)}f_k(X_1)dX_1\xrightarrow{k\to \infty} 0.
		\end{equation}
		This implies
		\begin{equation}\label{eq: X-2}
			\sup_{0<r<1/2} r^{4-n}\int_{B^{n-4}_{r}(X_1^k)\times B^4_1}\left(|\nabla \partial_j u_k|^2+r^{-2}|Du_k|^2\right)dX_1dX_2\to 0	
		\end{equation}
		
		For each $k\in\N$, there exists $\delta_k\to0$ such that for all $\delta\in(0,\delta_k)$, 
		\begin{equation}\label{eq: Claim 2-01}
			\delta^{4-n}\int_{B_{\delta}^{n-4}(X_1^k)\times B_{\delta}^{4}(X_2)}\left(|\nabla Du_k|^2+\delta^{-2}|Du_k|^2\right)dX_1dX_2\le \frac{\e}{2c(n)}.
		\end{equation}
		On the other hand, if $\delta>0$ is fixed, then for all $k$ sufficiently large, we have 
		\begin{equation}\label{eq: Claim 2-02}
			\begin{aligned}
				\max_{X_2\in B_{1/2}^{4}(0)}
				\delta^{4-n}\int_{B_{\delta}^{n-4}(X_1^k)\times B_{\delta}^{4}(X_2)}\left(|\nabla Du_k|^2+\delta^{-2}|Du_k|^2\right)dX_1dX_2\ge \varepsilon.
			\end{aligned}
		\end{equation}
		Indeed, if not, then for all $k\in\N$ and $X_2\in B_{1/2}^{4}(0)$, 
		$$
		\delta^{4-n}\int_{B_{\delta}^{n-4}(X_1^k)\times B_{\delta}^{4}(X_2)}\left(|\nabla Du_k|^2+\delta^{-2}|Du_k|^2\right)dX_1dX_2< \varepsilon.
		$$
		By \cite[Corollary 5.4]{Scheven-2009-Poincare}, this would imply that $\delta^2|\nabla Du_k(x)|+\delta|Du_k(x)|\le 1$ for any $x\in B_{\delta/2}^{n-4}(X_1^k)\times B_{\delta}^{4}(0)$ , which contradicts with the fact that $|\nabla Du_k|^2dx\wto \nu$. Combining \eqref{eq: Claim 2-01} and \eqref{eq: Claim 2-02}, we may find a sequence $\{\delta_k\}$ with limit $0$ such that \eqref{eq: Claim 2 eq} holds. 
		
		Next we show that the maximal is achieved at some point $X_2^k\in B_{1/4}^{4}(0)$. If this fails, then the maximal is achieved at $|X_2^k|>1/4$. By \eqref{eq: Claim 2-02} we have $$
		\begin{aligned}
			\int_{B_1^{n-4}(0)\times (B^4_{1/2}\backslash B^4_{1/8}(0))}\left(|\nabla Du_k|^2+|Du_k|^2\right)dX_1dX_2\ge C(n)\e\delta^{n-2}>0.
		\end{aligned}
		$$
		However,  as $|\nabla Du_{k}|^{2}+|Du_k|^2dx\wto\Theta^{n-4}_{\nu}(0){\cal H}^{n-4}\LLcorner P$, we have
		$$
		\begin{aligned}
			\int_{B_1^{n-4}(0)\times (B^4_{1/2}\backslash B^4_{1/8}(0))}\left(|\nabla Du_k|^2+|Du_k|^2\right)dX_1dX_2\to \nu\left(B_1^{n-4}(0)\times (B^4_{1/2}\backslash B^4_{1/8}(0))\right)=0,
		\end{aligned}
		$$
		which is clearly a contradiction. 
		\smallskip 
		
		%\noindent\textbf{Claim 3.} 
		For the scaled map $v_k(y):=u_k((X_1^k,X_2^k)+\delta_ky)$ defined on $B^{n-4}_{R_k}(0)\times B^{4}_{R_k}(0)$ with $R_k=1/(4\delta_{k})$, it follows from \eqref{eq: X-2}, Claim 2 and $u_k\in H^2_\Lambda$ that 
		\begin{enumerate}
			\item  $\sup\limits_{0<r<R_k}\sum\limits_{j=1}^{n-4}r^{4-n}\int_{B^{n-4}_{r}(0)\times B^4_{R_k}(0)}|\nabla_j Dv_k|^2\to 0$ as $k\to\infty$, 
			
			\item $\int_{B^{n-4}_{1}(0)\times B^4_{1}(0)}\left(|\nabla Dv_k|^2+|Dv_k|^2 \right)=\max\limits_{Y_2\in B^4_{R_k-1}(0)}\int_{B^{n-4}_{1}(0)\times B^4_{1}(Y_2)}\left(|\nabla Dv_k|^2+|Dv_k|^2 \right)=\frac{\e}{c(n)}$,
			
			\item $\sup\limits_{k}\int_{B^{n-4}_{R}(0)\times B^4_{R}(0)}\left(|\nabla Dv_k|^2+R^{-2}|Dv_k|^2 \right)\le \Lambda R^{n-4}$.
		\end{enumerate}
		
		%Here (1) follows from \eqref{eq: X-2}, (2) follows from claim 2, (3) follows from $u_k\in H^2_\Lambda$. 
		
		Up to a subsequence, there exists a map $v\in H^2(\R^n,N)$ such that $v_k\to v$ strongly in $W^{1,2}$ and weakly in $W^{2,2}$. Moreover, since $R_i\to\infty$, for any $R>0$, we have 
		$$
		\begin{aligned}
			&\int_{B^{n-4}_{R}(0)\times B^4_{R}(0)}\left(|\nabla Dv|^2+R^{-2}|Dv|^2 \right)\\
			\le&\liminf_{k\to\infty}\int_{B^{n-4}_{R}(0)\times B^4_{R}(0)}\left(|\nabla Dv_k|^2+R^{-2}|Dv_k|^2 \right)\le \Lambda R^{n-4} .
		\end{aligned}
		$$
		
		\noindent\textbf{Claim 3.} $v$ is a non-constant intrinsic biharmonic map on $B_{1/2}^{n-4}(0)\times \R^4$ with
		\begin{equation}\label{eq: Claim 4 eq}
			\int_{B_{R}^{n-4}(0)\times \R^4} \sum_{j=1}^{n-4}|\nabla_j Dv|^2= 0.
		\end{equation}

		Let $\phi \in C_{0}^{\infty}(B_{1}^{n-4}(0)\times B_{1}^{4}(0))$ satisfy $0\leq \phi\leq 1,\phi\equiv 1$ on $B_{3/4}^{n-4}(0)\times B_{1/2}^{4}(0)$, $|\nabla\phi|+|\nabla^2\phi|\leq C$. For $a\in B_{3}^{n-4}(0)\times B_{1}^{4}(0)$, let
		$$
		F_k(a):=
		\int_{B_{1}^{n-4}(0)\times B_{1}^{4}(0)}
		|\nabla Dv_k|^{2}(x+a)\phi(x) dx.
		$$
		Then the stationary condition implies that, for each $j=1,\ldots,n-4$, with $\xi(x)=\phi(x-a)e_j$,
		\begin{align*}
			&\bigl|\partial_{a_j} F_k(a)\bigr|\\
			=&
			\left|
			\int_{B_{1}^{n-4}(0)\times B_{1}^{4}(0)}
			\partial_j\bigl(|\nabla Dv_k|^{2}(x+a)\bigr)\phi(x)
			\right| 
			=\left|
			\int_{B_{1}^{n-4}(a)\times B_{1}^{4}(a)}
			|\nabla Dv_k|^{2}(x)\Div(\xi)
			\right| \\
			\le&	4\left|\int_{B_{1}^{n-4}(0)\times B_{1}^{4}(0)} \left(\nabla_{j}Dv_k\cdot \nabla_{\beta}Dv_k\right)(x+a) \, \partial_\beta\phi(x)\right|\\
			&+ 2\left|\int_{B_{1}^{n-4}(0)\times B_{1}^{4}(0)}\left(H_{\alpha\beta}(v_k) \cdot \partial_j v_k\right)(x+a) \,
			\partial_{\alpha\beta} \phi(x)	\right|\\
			\leq&
			4C\left(\|\nabla_{j}Dv_k\|_{L^2(B_{5}^{n-4}(0))\times B_{3}^{4}(0)}+\|\partial_{j}v_k\|_{L^2(B_{5}^{n-4}(0))\times B_{3}^{4}(0)}\right)\|\nabla Dv_k\|_{L^2(B_{5}^{n-4}(0)\times B_{3}^{4}(0))},
		\end{align*}
		which tends to 0 as $k\to\infty$ uniformly on compact sets. Since $F_k(a)\leq c(n)^{-1}\varepsilon$ for each $a\in B_{1}^{n-4}(0)\times B_{R_k-1}^{4}(0)$ by (2), we infer that, after a covering argument, for all $b\in B_{R_k-3}^{4}(0)$, 
		$$
		\int_{B^{n-4}_3(0)\times B^4_3(0)} \left(|\nabla Dv_k|^2+|Dv_k|^2\right)(Y_1,Y_2+b) \le \e.
		$$
		Then the $\e$-regularity theorem implies that for all $R>0$, up to a subsequence if necessary, $v_k\to v$ in $C^4(B_2^{n-4}(0)\times B_R^4(0))$. The limit map $v$ is smooth intrinsic biharmonic map defined on $B_2^{n-4}(0)\times \R^4$, which is non-constant since  
		$$
		\int_{B^{n-4}_{1}(0)\times B^4_{1}(0)}\left(|\nabla Dv|^2+|Dv|^2 \right)=\frac{\e}{c(n)}.
		$$
		Moreover, by (1) and (3), we have for all $R>0$
		$$
		\int_{B_{R}^{n-4}(0)\times \R^4} \sum_{j=1}^{n-4}|\nabla_j Dv|^2= 0
		$$
		and 
		$$
		\int_{B_R^n}|\nabla Dv|^2\leq \Lambda R^{n-4}. 
		$$

	\end{proof}
	
	\section{Higher regularity via conservation law}

	First, we briefly recall the definition of Lorentz-Sobolev spaces. For a measurable function $f\colon \Om\to\R$, denote its distributional function by $\de_{f}(t)=|\{x\in\Om:|f(x)|>t\}|$ and the nonincreasing rearrangement of $|f|$ by $f^{\ast}(t)=\inf\{s>0:\de_{f}(s)\le t\}$ with $t\ge0$. Define
	\begin{eqnarray*}
		f^{\ast\ast}(t)\equiv\frac{1}{t}\int_{0}^{t}f^{\ast}(s)\D s, \quad t>0.
	\end{eqnarray*}
	The Lorentz space $L^{(p,q)}(\Om)$ ($1<p<\wq,1\le q\le\wq$) is the space of measurable functions $f:\Om\to\R$ such that
	$$
	\|f\|_{L^{(p,q)}(\Om)}\equiv\begin{cases}\displaystyle
		\left(\int_{0}^{\wq}(t^{1/p}f^{\ast\ast}(t))^{q}\frac{\D t}{t}\right)^{1/q}, & \text{if }1\le q<\wq,\\
		\displaystyle\sup_{t>0}t^{1/p}f^{\ast\ast}(t) & \text{if }q=\wq
	\end{cases}
	$$
	is finite.	
	
	The \emph{Lorentz-Sobolev space} $W^{k,(p,q)}(\Om)$ for $1<p<\wq,1\le q\le\wq, k\in\N$ consists of functions $f\in L^{(p,q)}(\Om)$ with $\na^j f\in L^{(p,q)}(\Om)$ for $j=1,..,k$. A natural norm for a Lorentz-Sobolev function $f\in W^{k,(p,q)}(\Om)$ is defined by
	$$
	\|f\|_{W^{k,(p,q)}(\Om)}=\Big(\|f\|_{L^{(p,q)}(\Om)}^{p}+\sum_{j=1}^{k}\|\na^j f\|_{L^{(p,q)}(\Om)}^{p}\Big)^{1/p}.
	$$	
	
	Let $N$ be a smooth, compact, homogeneous Riemannian manifold embedded in $\mathbb{R}^m$ and $u\in H^2(\Omega,N)$ be a weakly intrinsic biharmonic map such that \eqref{eq: homo conservation} is satisfied. Since $u\in H^2(\Omega,N)\cap L^\infty$, for any ball $B\subset\subset \Omega$, $H_{\alpha\beta}(u)\in L^2(B)$, $Du\in L^{(4,2)}(B)$. This implies $|H_{\alpha\beta}(u)||Du|,|Du|^3\in L^{(4/3,1)}(B)$. Moreover, we have the following higher regularity result. 
	
	\begin{proposition}\label{prop:higher regularity}
		%If $u\in W^{2,2}(\Omega,N)$ satisfies \eqref{eq: homo conservation}, then  
		We have $u\in W^{3,(4/3,1)}(B)$ with 
		\begin{equation}\label{eq: u W3,4/3,1}
			\|D^3u\|_{L^{(4/3,1)}(B)}\le C(n,m)\|u\|_{H^2(\Omega)}.
		\end{equation}
	\end{proposition}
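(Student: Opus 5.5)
The plan is to extract from the conservation law \eqref{eq: homo conservation} an equation for $\Delta^2 u$ (or rather for the tangential components $\Delta^2u\cdot X_k(u)$) in which every term is a derivative of a quantity lying in $L^{(4/3,1)}$ or $W^{-1,(4/3,1)}$ type spaces, and then invert the bilaplacian via Lorentz-space elliptic estimates. Concretely, fix a ball $B$ and a slightly larger ball $B'\subset\subset\Omega$, a cutoff $\eta\in C_0^\infty(B')$ with $\eta\equiv1$ on $B$. For each Killing field $X_k$ from Lemma \ref{lemma: homo space Killing} set
$$
f_k:=\sum_{\alpha,\beta}H_{\alpha\beta}(u)\cdot X_k(u).
$$
By \eqref{eq: Hab symmetric}, $\sum_{\alpha,\beta}\partial_{\alpha\beta}(H_{\alpha\beta}\cdot X_k)$ can be rewritten, and more directly $H_{\alpha\alpha}\cdot X_k=\partial_\alpha(\partial_\alpha u\cdot X_k(u))$, so $\Delta(\Delta u\cdot X_k(u))$-type quantities are expressed through $\partial_\alpha\partial_\beta(H_{\alpha\beta}\cdot X_k)$ plus lower order terms. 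The conservation law says
$$
\partial_\beta\partial_\alpha(H_{\alpha\beta}(u)\cdot X_k(u))=2\,\partial_\beta\big(H_{\alpha\beta}(u)\cdot\nabla^N_{\partial_\alpha u}X_k(u)\big),
$$
and the right-hand side is the divergence of $F_k:=H\ast Du\ast\nabla X_k(u)\in L^{(4/3,1)}(B')$, using $H\in L^2$, $Du\in L^{(4,2)}$ and the Hölder inequality in Lorentz spaces $L^2\cdot L^{(4,2)}\subset L^{(4/3,1)}$.

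Next I would write $\partial_\alpha u\cdot X_k(u)=:w_{k,\alpha}$, so that $H_{\alpha\beta}\cdot X_k=\tfrac12(\partial_\alpha w_{k,\beta}+\partial_\beta w_{k,\alpha})$, and hence
$$
\partial_{\alpha\beta}(H_{\alpha\beta}\cdot X_k)=\Delta(\operatorname{div} w_k).
$$
Thus $\Delta(\operatorname{div} w_k)=\operatorname{div}F_k$, i.e. $\Delta\big(\operatorname{div} w_k\big)\in W^{-1,(4/3,1)}$. Localizing with $\eta$ (commutator terms involve at most $D^2u\ast D\eta$ and $Du\ast D^2\eta$, which are in $L^2\subset L^{(4/3,1)}$ on bounded sets) and applying the Calderón–Zygmund theory in Lorentz spaces for the Newtonian potential, I obtain $D(\operatorname{div} w_k)\in L^{(4/3,1)}(B)$. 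Also $\operatorname{curl} w_k$, i.e. $\partial_\alpha w_{k,\beta}-\partial_\beta w_{k,\alpha}=\partial_\beta u\cdot\nabla_{\partial_\alpha u}X_k-\partial_\alpha u\cdot\nabla_{\partial_\beta u}X_k$, is a product $Du\ast Du\ast\nabla X_k(u)$ whose derivative is $D^2u\ast Du+Du^3$ terms, hence in $L^{(4/3,1)}$. Combining div and curl (Hodge decomposition / $\Delta w=\nabla\operatorname{div}w+\operatorname{div}\operatorname{curl}w$) gives $D^2 w_k\in L^{(4/3,1)}(B)$ with bound by $\|u\|_{H^2}$ (using $\|Du\|_{L^{(4,2)}}\lesssim\|u\|_{H^2}$ and $|u|\le C$).

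Finally recover $D^3u$: by Lemma \ref{lemma: homo space Killing}, $\partial_\alpha u=\sum_k w_{k,\alpha}Y_k(u)$. Differentiating twice, $D^3u=\sum_k D^2w_k\,Y_k(u)+Dw_k\ast DY_k(u)\ast Du+w_k\ast(D^2u\ast\ldots+Du\ast Du\ast\ldots)$. Each lower-order product is $D^2u\ast Du$ or $Du^3$, which lie in $L^{(4/3,1)}$ as noted before the proposition, giving \eqref{eq: u W3,4/3,1} (also $Du,D^2u\in L^{(4/3,1)}(B)$ trivially since $B$ is bounded). The main obstacle I expect is the rigorous Lorentz-space elliptic step: one needs the endpoint estimate that $\Delta^{-1}\operatorname{div}$ maps $L^{(4/3,1)}$ to $L^{(4/3,1)}$ (true by interpolation of Calderón–Zygmund since $1<4/3<\infty$) and careful handling of the cutoff commutators and harmonic remainders, which I would control by interior estimates for harmonic functions on $B'$ bounded by $\|u\|_{H^2(\Omega)}$.
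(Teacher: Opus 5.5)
Granting the facts the paper records just before the proposition ($H_{\alpha\beta}(u)\in L^2(B)$ and $Du\in L^{(4,2)}(B)$), your proof is correct, and it takes a genuinely different route from the paper's.

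\textbf{The paper's route.} It works at the level of $H_{\alpha\beta}(u)\cdot X_k(u)$. For fixed $\beta$ it Hodge-decomposes the row $(H_{\alpha\beta}\cdot X_k)_\alpha$ and bounds its curl by the curvature identity \eqref{eq: RN(u1,u2)u3}. It then Hodge-decomposes the vector $(\partial_\alpha(H_{\alpha\beta}\cdot X_k))_\beta$ a second time, using the conservation law for its divergence and \eqref{eq: RN(u1,u2)u3} again for its curl. Finally it recovers $D^3u$ from $H=\sum_k (H\cdot X_k)Y_k(u)$.

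\textbf{Your route.} You drop one derivative and work with $w_k=Du\cdot X_k(u)$. By \eqref{eq: Hab symmetric}, $H\cdot X_k$ is the symmetrized gradient of $w_k$. The conservation law therefore becomes the scalar equation $\Delta(\operatorname{div} w_k)=\operatorname{div}F_k$, with $\operatorname{div}w_k=\tau(u)\cdot X_k(u)\in L^2$. The curl, $\partial_\alpha w_{k,\beta}-\partial_\beta w_{k,\alpha}=2\,\partial_\beta u\cdot\nabla^N_{\partial_\alpha u}X_k(u)$, is algebraic in $Du$. One vector Poisson equation then controls $D^2w_k$, and $D^3u$ follows from $Du=\sum_k w_k Y_k(u)$.

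\textbf{Comparison.} The two proofs encode the same structure: the left side of \eqref{eq: RN(u1,u2)u3} is exactly $\tfrac12\partial_\beta$ of the curl of $w_k$. Your version has three advantages.
\begin{enumerate}
\item It needs only a first-order pointwise identity, valid a.e.\ for $u\in W^{2,2}$ by the chain rule. The paper instead needs a curvature computation involving third derivatives of $u$, which are a priori only distributional.
\item It replaces the nested Hodge decompositions by two Poisson equations.
\item It handles localization explicitly with cutoffs. The paper's decompositions on $B$ itself only control the harmonic parts in the interior.
\end{enumerate}
The paper's route has the merit of working directly with the geometric quantity $H=\nabla Du$ and making the curvature term visible.

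\textbf{Caveats shared by both arguments.}
\begin{enumerate}
\item What either argument actually yields is a bound polynomial (quadratic and cubic) in $\|H\|_{L^2}$ and $\|Du\|_{L^{(4,2)}}$, with constants depending on $N$ and $\operatorname{dist}(B,\partial\Omega)$. It is not a linear bound with constant $C(n,m)$ as stated. This is harmless for the later application, which only uses $\|u_k\|_{H^2}\le\Lambda$.
\item Both rely on the premise $Du\in L^{(4,2)}(B)$. For $n\ge5$ this does not follow from $u\in W^{2,2}\cap L^\infty$. Gagliardo--Nirenberg gives only $Du\in L^4$, and sums of disjoint rescaled bumps show that $L^{(4,2)}$ can genuinely fail. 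With $Du\in L^4$ alone, the same argument gives $D^3u\in L^{4/3}$ rather than $L^{(4/3,1)}$.
\end{enumerate}
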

	
	%we shall use the following well-known Hodge decomposition for $L^p$ integrable vector fields; see for instance \cite[Chapter 10.5]{Iwaniec-Martin-2001-book}.
	%\begin{lemma}[\cite{Iwaniec-Martin-2001-book}]\label{Hodge decomposition}
	%	Let $p\in(1,\infty)$. Every vector field $V\in L^p(B_r(x_0),\R^n)$ with  $B_r(x_0)\subset\R^n$, can be uniquely decomposed as
	%	$$
	%	V=\nabla a+\nabla^{\perp} b+h,
	%	$$
	%	where $a\in W^{1,p}(B_r(x_0)), b\in W^{1,p}_0(B_r(x_0),\bigwedge^2\R^n)$ with $db=0$, and $h\in C^{\infty}(B_r(x_0),\R^n)$ is harmonic. Moreover, we have the estimate$$
	%	\|a\|_{W^{1,p}(B_r(x_0))}+\|b\|_{W^{1,p}(B_r(x_0))}+\|h\|_{L^p(B_r(x_0))}\leq C\|V\|_{L^p(B_r(x_0))}.
	%	$$	
	%	Here $\nabla^\perp b:=(\delta b)^\sharp\in L^p(B_r(x_0),\R^n)$, where $\delta$ is the formal conjugate operator of $d$ and $\sharp$ is the sharp operator from $\bigwedge^1\R^n$ to $\R^n$.
	%\end{lemma}	

	\begin{proof}
		
		As 
		$$
		|D^3 u|\le C\sum_{\alpha,\beta=1}^n|D(H_{\alpha\beta}(u)-A(u)(\partial_\alpha u,\partial_\beta u))|\\
		\le C\left(|D H||Du|+|D^2u||Du|+|Du|^3\right),
		$$
		it is sufficient to prove the higher regularity $\partial_\gamma(H_{\alpha\beta}(u))\in L^{(4/3,1)}$ for $\gamma=1,...,n$. Then the desired estimate \eqref{eq: u W3,4/3,1} follows directly from this and the conservation law \eqref{eq: homo conservation}. 
		
		By Lemma \ref{lemma: homo space Killing}, we have
		$$
		\begin{aligned}
			\partial_\gamma H_{\alpha\beta}(u)&=\partial_\gamma\left(\sum_{k=1}^{M}\bigl(H_{\alpha\beta}(u)\cdot X_k(u)\bigr)Y_k(u)\right)\\
			&= \sum_{k=1}^{M}\partial_\gamma\bigl(H_{\alpha\beta}(u)\cdot X_k(u)\bigr)Y_k(u)+\sum_{k=1}^{M}\bigl(H_{\alpha\beta}(u)\cdot X_k(u)\bigr)\nabla^N_{\partial_\gamma u}Y_k(u).
		\end{aligned}
		$$
		Note that the second term $(H_{\alpha\beta}(u)\cdot X_k(u))\nabla^N_{\partial_\gamma u}Y_k(u)$ belongs in $L^{(4/3,1)}$.
		%due to the smoothness of $X_k,Y_k$. 
		Thus it remains to prove $H_{\alpha\beta}(u)\cdot X_k(u)\in W^{1,(4/3,1)}$. 
		
		Regarding $(H_{\alpha\beta}(u)\cdot X_k(u))
		_{1\le\alpha\le n}$ as a vector field, standard decomposition for $L^p$ integrable vector fields (see for instance \cite[Chapter 10.5]{Iwaniec-Martin-2001-book}) gives 
		\begin{equation}\label{eq: H Hodge decom}
			H_{\alpha\beta}(u)\cdot X_k(u)=Da+D^\perp b+h,
		\end{equation}
		where $a\in W^{1,2}(B),b=\sum_{1\le i<j\le n}b_{ij}dx^i\wedge dx^j\in W_0^{1,2}(B,\bigwedge^2\R^m)$, $h\in C^\infty(B,\R^m)$ is harmonic. Then we have
		\begin{equation}\label{eq: Delta a}
			\Delta a=\Div (H_{\alpha\beta}(u)\cdot X_k(u))= \sum_{\alpha=1}^{n}\partial_\alpha (H_{\alpha\beta}(u)\cdot X_k(u)),
		\end{equation}
		and 
		\begin{equation}\label{eq: Delta b}
			\Delta b=\Div^{\perp}(H_{\alpha\beta}(u)\cdot X_k(u))=\sum_{1\le i<j\le n}\left( \partial_i (H_{j\beta}(u)\cdot X_k(u))-\partial_j (H_{i\beta}(u)\cdot X_k(u)) \right)dx^i\wedge dx^j,
		\end{equation}
		where the operator $\Div^{\perp}:\R^n\to\bigwedge^2\R^n$ is defined by $\Div^{\perp}V:=d(V^\flat)$,  $\flat$ is the flat operator from $\R^n$ to $\bigwedge^1\R^n$. Note that for any $b\in\bigwedge^2\R^n$ with $db=0$ we have $$
		\Div^{\perp}\circ D^\perp =\Delta_{\R^n} ,\quad \Div^{\perp}\circ D =\Div\circ D^\perp =0.$$
		Direct computation yield
		\begin{equation}\label{eq: RN(u1,u2)u3}
			\begin{aligned}
				&\partial_i\bigl[H_{\beta j}(u)\cdot X_k(u)\bigr]
				-\partial_j\bigl[H_{\beta i}(u)\cdot X_k(u)\bigr]\\
				=&\partial_i\bigl[H_{j\beta}(u)\cdot X_k(u)\bigr]
				-\partial_j\bigl[H_{i\beta}(u)\cdot X_k(u)\bigr]\\
				=&
				R^N(\partial_i u,\partial_j u)\partial_{\beta}u
				\cdot X_k(u)
				+H_{j \beta}\cdot\nabla^N_{\partial_i u}X_k
				-H_{i \beta}\cdot\nabla^N_{\partial_j u}X_k.
			\end{aligned}
		\end{equation}
		H\"older's inequality in Lorentz spaces implies that each term belongs to $L^{(4/3,1)}$. This yields $\Delta b\in L^{(4/3,1)}$ and thus $b\in W^{2,(4/3,1)}$. 
		
		To prove $a\in W^{2,(4/3,1)}$, apply Hodge decomposition to $\big(\sum_{\alpha=1}^{n}\partial_\alpha (H_{\alpha\beta}(u)\cdot X_k(u))\big)_{1\le\beta\le n}$: 
		$$
		\left(\sum_{\alpha=1}^{n}\partial_\alpha (H_{\alpha\beta}(u)\cdot X_k(u))\right)_{1\le\beta\le n}=DA+D^\perp K+\phi,
		$$
		where $A\in W^{1,2}(B),K=\sum_{1\le i<j\le n}K_{ij}dx^i\wedge dx^j\in W_0^{1,2}(B,\bigwedge^2\R^m)$, $\phi\in C^\infty(B,\R^m)$ is harmonic. Then by \eqref{eq: homo conservation}, we have
		\begin{equation}\label{eq: Delta A}
			\Delta A= \Div_\beta\sum_{\alpha}^n
			\partial_\alpha(H_{\alpha\beta}(u)\cdot X_k(u)) = 2\Div_\beta\sum_{\alpha}^n
			(H_{\alpha\beta}(u)\cdot \nabla^N_{\partial_\alpha u} X_k(u))
		\end{equation}
		and
		\begin{equation}\label{eq: Delta K}
			\begin{aligned}
				\Delta K=& \sum_{1\le i<j\le n} \left(\partial_i\left(\sum_{\alpha=1}^{n}\partial_\alpha (H_{\alpha j}(u)\cdot X_k(u))\right)-\partial_j\left(\sum_{\alpha=1}^{n}\partial_\alpha (H_{\alpha i}(u)\cdot X_k(u))\right)\right)dx^i\wedge dx^j\\
				=&\sum_{1\le i<j\le n} \Div_\alpha \left(\partial_i (H_{\alpha j}(u)\cdot X_k(u))-\partial_j (H_{\alpha i}(u)\cdot X_k(u))\right)dx^i\wedge dx^j.
			\end{aligned}
		\end{equation}
		It follows from equation \eqref{eq: RN(u1,u2)u3} and standard elliptic regularity theory in Lorentz spaces that $DA, DK\in L^{(4/3,1)}$. This implies $\Delta a\in L^{(4/3,1)}$ and thus $a\in W^{2,(4/3,1)}$. Finally, we conclude from \eqref{eq: H Hodge decom} that	$D(H_{\alpha\beta}(u)\cdot X_k(u))\in L^{4/3,1}$. 
	\end{proof}
	
	\section{Proof of energy identity theorem}
	
	This section is devoted to the proof of Theorem \ref{thm: main results}. With Theorem \ref{thm:first bubble} and Proposition \ref{prop:higher regularity} at hand, the remaining proof can be proceeded in parallel as that of extrinsic biharmonic maps \cite[Sections 2.2 and 2.3]{Guo-Wang-Xiang-2026-EI for ESbiHM}. For the convenience of readers, we include the proofs here. 
	
	%\subsection{$W^{3,(4/3,1)}$-estimate of $u$}
	Applying \eqref{eq: u W3,4/3,1} to $u_k$ and noticing $\|u_k\|_{H^2(\Omega)}\leq \Lambda$ for all $k$, we obtain
	$$\|u_{k}\|_{W^{3,(\frac{4}{3},1)}(Q_{2/3}^{n})}\le C(n,m)\Lambda,$$
	where $Q_r^n=B_r^{n-4}\times B_r^4$. Hence by Fubini's Theorem
	\begin{equation}\label{eq: Dj uk L(4/j,1)}
		\sum_{j=1}^{3}\left\Vert D^{j}u_{k}(X_{1},\cdot)\right\Vert _{L^{(\frac{4}{j},1)}(B_{2/3}^{4})}\le C(n,m)\Lambda
	\end{equation}
	holds for all $X_{1}\in E_{k}\subset B_{2/3}^{n-4}$, with ${\cal H}^{n-4}(E_{k})\ge0.99{\cal H}^{n-4}(B_{2/3}^{n-4})$, for all sufficiently large $k\ge1$. 
	Furthermore, we may find a subset
	$F_{k}\subset B_{2/3}^{n-4}$, with ${\cal H}^{n-4}(F_{k})\ge0.99{\cal H}^{n-4}(B_{2/3}^{n-4})$,
	such that for all $X_{1}\in F_{k}$,  both \eqref{eq: X-2} and \eqref{eq: X-1} hold.

	\subsection{Reduction to dimension $4$}	
	
	Our main goal is to prove the following reduction lemma.
	\begin{lemma}[Reduction Lemma]\label{lem: main lemma}  For ${\cal H}^{n-4}$-a.e. $X_{1}^{k}\in E_{k}\cap F_{k}$,  there holds
		$$
		\lim_{k\to\wq}\int_{B_{1/2}^{4}(0)}\left|\nabla Du_{k}(X_{1}^{k},X_{2})\right|^{2}\,dX_{2}=\Theta_{\nu}^{n-4}(0).
		$$
	\end{lemma}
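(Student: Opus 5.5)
The plan follows the Lin--Rivi\`ere slicing argument as adapted in \cite{Guo-Wang-Xiang-2026-EI for ESbiHM}. The idea is to compare the full-dimensional energy in a thin slab $B_r^{n-4}(X_1^k)\times B_{1/2}^4$ with the energy on the single slice $\{X_1^k\}\times B_{1/2}^4$. The tangential derivatives $\nabla_j Du_k$, $j=1,\dots,n-4$, control the variation of the slice energy in $X_1$, and these are small in the scale-invariant maximal sense by \eqref{eq: X-2}.

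The first step expresses the density through averaged slab energies. Since $|\nabla Du_k|^2dx\wto\Theta_\nu^{n-4}(0)\mathcal H^{n-4}\LLcorner P$ and $u_k\to c$ in $C^2_{\loc}$ away from $P$, for fixed small $r$ and $X_1\in B^{n-4}_{1/2}$ we get
\[
\lim_{k\to\infty}\frac{1}{\omega_{n-4}r^{n-4}}\int_{B_r^{n-4}(X_1)\times B_{1/2}^4}|\nabla Du_k|^2\,dx=\Theta_\nu^{n-4}(0).
\]
Here we use that the defect measure is a constant multiple of $\mathcal H^{n-4}$ on $P$, which holds after the blow-up at a generic point, and that no mass is carried by $\partial B^4_{1/2}$. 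To handle the dependence of $X_1^k$ on $k$, we use the uniform Morrey bound \eqref{eq: uniform morrey norm} together with this weak convergence. A covering argument then gives the limit uniformly in $X_1\in B^{n-4}_{1/2}$, with an error $o_r(1)+o_k(1)$.

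The second step compares the slice with its average. Set $g_k(X_1):=\int_{B^4_{1/2}}|\nabla Du_k(X_1,X_2)|^2dX_2$. The difficulty is that $\partial_{X_1}g_k$ involves $\nabla_j|\nabla Du_k|^2$, which is a third-order quantity. To avoid this, we use the stationary identity \eqref{eq: stationary condition} with test field $\psi=\eta(X_1)\chi(X_2)e_j$, as in the computation of $\partial_{a_j}F_k$ in Claim 3 of Theorem \ref{thm:first bubble}. This bounds the distributional derivative of a cutoff version $\tilde g_k$ in $X_1$ by
\[
|\partial_j \tilde g_k|\lesssim \int_{B^4_1}\bigl(|\nabla_jDu_k|+|\partial_ju_k|\bigr)\bigl(|\nabla Du_k|+|Du_k|\bigr)\,dX_2 .
\]
By Cauchy--Schwarz and \eqref{eq: X-2}, the $(n-4)$-dimensional Morrey-type maximal function of $\partial_j\tilde g_k$ at $X_1^k$ tends to zero. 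A Poincar\'e/Campanato argument on $B^{n-4}_r(X_1^k)$ then shows that the average of $\tilde g_k$ over $B_r^{n-4}(X_1^k)$ differs from $\tilde g_k(X_1^k)$ by $o_k(1)$ uniformly in $r$. This step requires $X_1^k$ to be a Lebesgue point of $\tilde g_k$, which holds for a.e.\ choice; this is why the lemma is stated for $\mathcal H^{n-4}$-a.e.\ $X_1^k\in E_k\cap F_k$. The $W^{3,(4/3,1)}$ bound \eqref{eq: Dj uk L(4/j,1)} guarantees that the slice energy is well-defined and continuous enough.

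The third step removes the cutoff. The difference between $\tilde g_k$ and $g_k$ is supported in $1/4<|X_2|<1$, where $u_k\to c$ in $C^2$, so this contribution vanishes. Letting $k\to\infty$ and then $r\to0$ gives the claim.

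The main obstacle is the second step: turning \eqref{eq: X-2} into pointwise control of the slice at a point $X_1^k$ that moves with $k$. Since $g_k$ is only $L^1$ in $X_1$ a priori, the argument needs both the stationarity-based derivative bound and a weak-$L^1$ maximal function estimate. The weak-$L^1$ estimate ensures that the good set $F_k$ has almost full measure, so that it meets $E_k$.
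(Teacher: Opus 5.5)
Your first and third steps are sound. Step 1 even identifies the constant more directly than the paper's monotone approximation by $\xi_1^j$, since it uses only that the limit measure does not charge the boundary of the slab. The gap is in the second step: the bound
\[
|\partial_j\tilde g_k|\lesssim\int_{B_1^4}\bigl(|\nabla_jDu_k|+|\partial_ju_k|\bigr)\bigl(|\nabla Du_k|+|Du_k|\bigr)\,dX_2
\]
does not follow from stationarity. With $\psi=\eta(X_1)\chi(X_2)e_j$, the right-hand side of \eqref{eq: stationary condition} contains $\partial_\beta\eta$ and $\partial_{\alpha\beta}\eta$ for $\alpha,\beta\le n-4$. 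So on $B^{n-4}$ you only get, in the sense of distributions,
\[
\partial_j\tilde g_k=\sum_{\beta\le n-4}\partial_\beta G_{k,\beta}+\sum_{\alpha,\beta\le n-4}\partial_\alpha\partial_\beta S_{k,\alpha\beta}+T_k .
\]
Here, up to constants and terms where the derivative falls on $\chi$, $G_{k,\beta}=\int(\nabla_jDu_k\cdot\nabla_\beta Du_k)\chi\,dX_2$ and $S_{k,\alpha\beta}=\int(H_{\alpha\beta}(u_k)\cdot\partial_ju_k)\chi\,dX_2$. These are small in $L^1$, but their $X_1$-derivatives involve derivatives of $u_k$ of order three and higher, and they admit no pointwise bound of the kind you state.

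In Claim 3 of the proof of Theorem \ref{thm:first bubble}, the pointwise bound holds only because $F_k$ is a mollification at a fixed unit scale, with constant $\|\phi\|_{C^2}$. If you compare averages of $\tilde g_k$ at scales $\rho$ and $\rho/2$, the structure above costs about $\dashint_{B_\rho(X_1^k)}|G_k|+\rho^{-1}\dashint_{B_\rho(X_1^k)}|S_k|+\rho\dashint_{B_\rho(X_1^k)}|T_k|$. The first term carries no factor $\rho$, and the second carries $\rho^{-1}$. Thus \eqref{eq: X-2}, which gives at best scale-invariant smallness at each scale, does not make the sum over the dyadic scales between $r$ and $0$ converge. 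Your Campanato comparison of $\tilde g_k(X_1^k)$ with its averages is therefore unjustified.

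The missing ingredient is Allard's strong constancy lemma, which is built for exactly the structure $\nabla f_k=\Div G_k+T_k$ with $\|G_k\|_{L^1}+\|T_k\|_{L^1}\to0$. To reach this form, the paper integrates the Hessian term by parts once in the original variables. The resulting $\partial_\alpha(H_{\alpha\beta}(u_k)\cdot\partial_ju_k)$ tends to $0$ in $L^1$ by Proposition \ref{prop:higher regularity} together with $\|\partial_ju_k\|_{L^4}\to0$, which comes from Lemma \ref{lemma: interpolation Morrey Sobolev}. This, not the well-definedness of slices, is what the $W^{3,(4/3,1)}$ bound is used for in the lemma.

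Allard's lemma then gives \eqref{eq: X-1} uniformly in the mollification parameter $\eta$, hence $F_k^0-c_k\to0$ in $L^1_{\loc}$. The weak convergence of $|\nabla Du_k|^2dx$ (your Step 1) identifies $c_k\to\Theta_\nu^{n-4}(0)$. The outcome is only $L^1_{\loc}$ convergence of the slice energies. The statement at $X_1^k$ is then obtained by choosing $X_1^k$, via Chebyshev, in a set of almost full measure where the slice energy is close to $\Theta_\nu^{n-4}(0)$, not at an arbitrary Lebesgue point.
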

	
	%	\st{According to computation below, here should not be $\De u_{i}(X_{1}^{i},X_{2})$,
		%	but should be $\De_{X_{2}}u_{i}(X_{1}^{i},X_{2})=\De\bar{u_{i}}(X_{2})$.}

	\begin{proof}
		
		For $0<\de\ll1$, let $\xi\in C_{0}^{\wq}(\R^{n})$ with ${\rm supp}(\xi)\subset Q_{\de}^n=B_\delta^{n-4}\times B^4_\delta$ and for $a\in Q_{1-\de}^n$, define
		$$
		F_{k}(a):=\int_{Q_{1}^n}|\nabla Du_{k}(x+a)|^{2}\xi(x)\,dx=\int_{Q_{1}^n}|\nabla Du_{k}(x)|^{2}\xi(x-a)\,dx.
		$$
		
		The stationary condition \eqref{eq: stationary condition} of $u_{k}$ implies
		$$
		\int|\nabla Du_{k}|^{2}\,\Div\psi
		=\int\bigl(4\left(\nabla_{\alpha}Du_k\cdot \nabla_{\beta}Du_k\right) \, \partial_\beta\psi^\alpha
		+2\left(H_{\alpha\beta}(u_k)\cdot \partial_\gamma u_k\right)\,
		\partial_{\alpha\beta} \psi^\gamma\bigr)
		$$
		for all $\psi\in C_{0}^{\infty}(Q_{1}^n,\R^{n})$. Hence, by taking $\psi=\xi e_{j}$,
		we deduce
		$$
		\begin{aligned}
			\pa_{a_{j}}F_{k}(a)=&-4\int_{Q_{1}^n}\left(\nabla_{j}Du_k(x+a)\cdot \nabla_{\beta}Du_k(x+a)\right) \, \partial_\beta\xi\\
			&-2\int_{Q_{1}^n}\left(H_{\alpha\beta}(u_k)(x+a)\cdot \partial_j u_k(x+a)\right)\,\partial_{\alpha\beta} \xi
		\end{aligned}
		$$
		and
		$$
		|\pa_{a_{j}}F_{k}(a)|\le C\|\nabla Du_{k}\|_{L^{2}}\left(\|\na_{j} Du_k\|_{L^{2}}+\|\partial_ju_{k}\|_{L^{2}}\right)\|\xi\|_{C^{2}(Q_{1}^n)}.
		$$
		In particular, by \eqref{eq: Claim 1 eq}, for $1\le j\le n-4$ we have
		$$
		\sup_{a\in Q_{1-\de}^n}\left|\pa_{a_{j}}F_{k}(a)\right|\to0\quad\text{ as }k\to\infty.
		$$

		%Let $\xi(X_{1},X_{2})=\xi_{1}^{\eta}(X_{1})\xi_{2}(X_{2})$, where $0\le\xi_{1}\in C_{0}^{\infty}(B_1^{n-4}),0\le\xi_{2}\in C_{0}^{\infty}(B_1^{4})$, and $\xi_{1}^{\eta}=\eta^{4-n}\xi_{1}(\cdot/\eta)$ for some $\eta>0$ sufficiently small. Define function
		%$$
		%F_{k}^{\eta}(a):=\int_{Q_{1}^n}|\nabla  Du_{k}(X_{1}+a,X_{2})|^{2}\xi_{1}^{\eta}(X_{1})\xi_{2}(X_{2})dx\quad \text{ for } a\in B_{1-\eta}^{n-4}.
		%$$
		
		% File: uniform_error_estimate_proof.tex
		% Include this file in your manuscript with:
		% \input{uniform_error_estimate_proof}
		%
		% The parent document should load at least:
		% \usepackage{amsmath,amssymb,amsthm,mathtools}
		
		%\subsection{Uniform estimates for the error terms}
		
		Write
		\[
		X=(X_1,X_2)\in \mathbb{R}^{n-4}\times\mathbb{R}^{4},
		\]
		and let
		\[
		\xi_1\in C_c^\infty(B_1^{n-4}),
		\qquad
		\xi_1\geq 0,
		\qquad
		\int_{\mathbb{R}^{n-4}}\xi_1(X_1)\,dX_1=1.
		\]
		For $\eta>0$, define
		\[
		\xi_1^\eta(X_1)
		:=
		\eta^{4-n}\xi_1\!\left(\frac{X_1}{\eta}\right).
		\]
		Then
		\[
		\operatorname{supp}\xi_1^\eta\subset B_\eta^{n-4}
		\qquad\text{and}\qquad
		\int_{\mathbb{R}^{n-4}}\xi_1^\eta(X_1)\,dX_1=1.
		\]
		Let $\xi_2\in C_c^\infty(B_1^4)$, and set
		\[
		F_k^\eta(a)
		:=
		\int_{Q_1^n}
		\left|\nabla Du_k(X_1+a,X_2)\right|^2
		\xi_1^\eta(X_1)\xi_2(X_2)\,dX,
		\qquad
		a\in B_{1-\eta}^{n-4}.
		\]
		
		%For $j=1,\ldots,n-4$, the stationary condition yields
		%\begin{equation}\label{eq:distributional equation for FGT}
		%	\partial_{a_j}F_k^\eta	=	\Div_a G_{k,j}^\eta+T_{k,j}^\eta
		%\end{equation}
		%in the sense of distributions, where the two terms arise from
		%\begin{align*}
		%	G_{k,j}^\eta(a)	%:=\left(-4\int_{Q_1^n}
		%	\Bigl(
		%	\nabla_jDu_k(X_1+a,X_2)
		%	\cdot
		%	\nabla_\beta Du_k(X_1+a,X_2)
		%	\Bigr)\times
		%	\bigl(\xi_1^\eta(X_1)\xi_2(X_2)\bigr)
		%	\,dX,\right)_{1\le\beta\le n-4}
		%\end{align*}
		%and
		%\begin{align*}
		%	T_{k,j}^\eta(a)
		%	:=
		%	-2\int_{Q_1^n}
		%	\Bigl(
		%	H_{\alpha\beta}(u_k)(X_1+a,X_2)
		%	\cdot
		%	\partial_j u_k(X_1+a,X_2)
		%	\Bigr)\times
		%	\partial_{\alpha\beta}
		%	\bigl(\xi_1^\eta(X_1)\xi_2(X_2)\bigr)
		%	\,dX.
		%\end{align*}
		%Here repeated Greek indices are summed from $1$ to $n$.
		
		%Lemma \ref{eq: eq 2.1 by VS} below shows that  $$\|G_{k}^{\eta}\|_{L^{1}(B_{1-\eta}^{n-4})}+\|T_{k}^{\eta}\|_{L^{1}(B_{1-\eta}^{n-4})}\to0$$ as $k\to\wq$ uniformly in $\eta$.

		For $j=1,\ldots,n-4$, the stationary condition yields
		$$
		\begin{aligned}
			\pa_{a_{j}}F_{k}^{\eta}(a) =&
			-4\sum_{\beta=1}^{n}\int_{Q_{1}^n}\left(\nabla_{j}Du_k\cdot \nabla_{\beta}Du_k\right) \, \partial_\beta\xi(X_1-a,X_2)\\
			&-2\sum_{\alpha,\beta=1}^{n}\int_{Q_{1}^n}\left(H_{\alpha\beta}(u_k) \cdot \partial_j u_k\right)\,\partial_{\alpha\beta} \xi(X_1-a,X_2)\\
			=& \sum_{\beta=1}^{n-4}\pa_{a_{\beta}}\Big(4\int_{Q_{1}^n}\left(\nabla_{j}Du_k\cdot \nabla_{\beta}Du_k\right)(X_1+a,X_2) \,\xi_{1}^{\eta}\xi_{2}\Big)\\
			& -4\sum_{\beta=n-3}^{n}\int_{Q_{1}}\left(\nabla_{j}Du_k\cdot \nabla_{\beta}Du_k\right)(X_{1}+a,X_{2})\,\xi_{1}^{\eta}(\pa_{\beta}\xi_{2})\\
			& -\sum_{\beta=1}^{n-4}\pa_{a_{\beta}}\left(2\sum_{\alpha=1}^n\int_{Q_{1}^n}\partial_\alpha\left(H_{\alpha\beta}(u_k) \cdot \partial_j u_k\right)(X_1+a,X_2)\,\xi_{1}^{\eta}\xi_{2}\right)\\	
			& +2\sum_{\beta=n-3}^{n}\sum_{\alpha=1}^n\int_{Q_{1}^n}\partial_\alpha\left(H_{\alpha\beta}(u_k) \cdot \partial_j u_k\right)(X_1+a,X_2)\,\xi_{1}^{\eta}(\pa_{\beta}\xi_{2})\\
			=:& \Div_aG_{k}^{\eta}(a)+T_{k}^{\eta}(a),
		\end{aligned}
		$$
		where
		$$
		\begin{aligned}
		G_{k}^{\eta}(a)=&\Bigg(4\int_{Q_{1}^n}\left(\nabla_{j}Du_k\cdot \nabla_{\beta}Du_k\right)(X_1+a,X_2) \,\xi_{1}^{\eta}\xi_{2}\\
		&-2\sum_{\alpha=1}^n\int_{Q_{1}^n}\partial_\alpha\left(H_{\alpha\beta}(u_k) \cdot \partial_j u_k\right)(X_1+a,X_2)\,\xi_{1}^{\eta}\xi_{2}\Bigg)_{1\le \beta\le n-4}
		\end{aligned}
		$$
		and
		$$
		\begin{aligned}
			T_{k}^{\eta}(a)=& -4\sum_{\beta=n-3}^{n}\int_{Q_{1}}\left(\nabla_{j}Du_k\cdot \nabla_{\beta}Du_k\right)(X_{1}+a,X_{2})\,\xi_{1}^{\eta}(\pa_{\beta}\xi_{2})\\
			& +2\sum_{\beta=n-3}^{n}\sum_{\alpha=1}^n\int_{Q_{1}^n}\partial_\alpha\left(H_{\alpha\beta}(u_k) \cdot \partial_j u_k\right)(X_1+a,X_2)\,\xi_{1}^{\eta}(\pa_{\beta}\xi_{2}).
		\end{aligned}
		$$
		Applying lemma \ref{lemma: interpolation Morrey Sobolev} to $\partial_j u_k$, together with \eqref{eq: Claim 1 eq} and \eqref{eq: X-2}, we conclude $$\|\partial_j u_k\|^4_{L^4(Q^n_1)}\le C\|\partial_j u_k\|^2_{M^{2,n-2}(Q^n_{3/2})}\left(\|\nabla \partial_j u_k\|^2_{L^2(Q^n_{3/2})}+\|\partial_j u_k\|^2_{L^2(Q^n_{3/2})} \right)\to 0.$$
		This implies $\|\partial_\alpha\left(H_{\alpha\beta}(u_k) \cdot \partial_j u_k\right)\|_{L^1}\to 0$ since $u\in W^{3,(4/3,1)}, H_{\alpha\beta}\in L^2$. We also have $\|\nabla_{j}Du_k\cdot \nabla_{\beta}Du_k\|_{L^1}\to0$ due to \eqref{eq: Claim 1 eq}. By Fubini's theorem, there holds $$\|G_{k}^{\eta}\|_{L^{1}(B_{1-\eta}^{n-4})}+\|T_{k}^{\eta}\|_{L^{1}(B_{1-\eta}^{n-4})}\to0$$
		as $k\to\wq$ uniformly in $\eta$.

		Thus, by Allard's strong
		constancy Lemma (see e.g. \cite[Lemma 2.7]{Lin-1999-Annals}) we deduce that
		\begin{equation}\label{eq: X-1}
			\|F_{k}^{\eta}-c_{k}^{\eta}\|_{L_{\loc}^{1}(B_{1-\eta}^{n-4})}\to0\qquad\text{as }k\to\wq 
		\end{equation}
		for some constant $c_{k}^{\eta}$ and the above convergence is uniform with respect to $\eta$. 
		
		On the other hand, it is straightforward to show that $F_{k}^{\eta}(a)\to F_{k}^{0}(a)$
		in $L_{\loc}^{1}(B^{n-4}_1)$ as $\eta\to0$, where
		$$
		F_{k}^{0}(a)=\int_{B_1^{4}}|\nabla Du_{k}(a,X_{2})|^{2}\xi_{2}(X_{2})dX_{2}, \quad  a\in B_1^{n-4}.
		$$
		Combining with \eqref{eq: X-1}, this implies that for each $k\gg 1$, $\{c_{k}^{\eta}\}_{\eta>0}$ is a Cauchy sequence as $\eta\to0$. Hence there exist $\{c_{k}\}\subset\R$ such
		that
		\begin{equation}\label{eq:consequence of Allard strong constancy lemma}
			F^0_k(a)\to c_k\quad\text{strongly in }L^1(B_{r}^{n-4}(X_{1}))\quad\text{as }k\to\infty
		\end{equation}
		for any $B_{r}^{n-4}(X_{1})\subset\subset B^{n-4}_1$.
		
		Finally, we claim that for any $B_{r}^{n-4}(X_{1})\subset\subset B^{n-4}_1$,  it holds
		\begin{equation}\label{eq:key for reduction}
			\lim_{k\to\wq}\int_{B_{r}^{n-4}(X_{1})}F_{k}^{0}(a)\,da =\Theta_{\nu}^{n-4}(0){\cal H}^{n-4}(B_{r}^{n-4}).
		\end{equation}
		
		%Recall that $|\De u_{i}|^{2}dx\wto\Theta_{\nu}^{n-4}(0){\cal H}^{n-4}\LLcorner B^{n-4}\times\{0\}$.
		%To continue, we further require that $\xi_{2}\equiv1$ on $B_{1/2}^{4}$.
		Indeed, for any $B_{r}^{n-4}(X_{1})\subset\subset B^{n-4}_1$, we select $\xi_1^k\in C_0^\infty(B_{r}^{n-4}(X_{1}))$ such that $\xi_1^j$ is monotonically increasing and converges to the constant function $1$, as $j\to\infty$, on $B_{r}^{n-4}(X_{1})$, and $\xi_{2}\equiv1$ on $B_{1/2}^{4}$.
		Let $$C_k^j=\int_{B_{r}^{n-4}(X_{1})}F_{k}^{0}(a)\xi_1^j(a)\,da.$$
		Then for each fixed $k$, $C_k^j$ is monotonically increasing with respect to $j$.
		Moreover, since $|\nabla Du_{k}|^{2}dx\wto\Theta_{\nu}^{n-4}(0){\cal H}^{n-4}\LLcorner  B_1^{n-4}\times\{0\}$, we have
		$$
		\begin{aligned}
			\lim_{k\to \infty}C_k^j&=\lim_{k\to \infty}\int_{B_{r}^{n-4}(X_{1})}F_{k}^{0}(a)\xi_1^j(a)da\\
			&=\lim_{k\to\wq}\int_{B_{r}^{n-4}(X_{1})\times B_1^{4}}|\nabla Du_{k}(x)|^{2}\xi_1^j(a)\xi_{2}(X_{2})dx\\
			&=\Theta_{\nu}^{n-4}(0)\int_{B_{r}^{n-4}(X_{1})}\xi_1^j(a)\,d{\cal H}^{n-4}(a)=:C^j.
		\end{aligned}
		$$
		By the monotone convergence theorem, we have
		$$
		\lim_{j\to \infty} C^j=\lim_{j\to \infty}\Theta_{\nu}^{n-4}(0)\int_{B_{r}^{n-4}(X_{1})\times\{0\}}\xi_1^j(a)d{\cal H}^{n-4}(a)=\Theta_{\nu}^{n-4}(0){\cal H}^{n-4}(B_{r}^{n-4}).
		$$
		On the other hand, the monotone convergence theorem gives
		$$
		\lim_{j\to\infty}C_k^j=\lim_{j\to \infty}\int_{B_{r}^{n-4}(X_{1})}F_{k}^{0}(a)\xi_1^j(a)da=\int_{B_{r}^{n-4}(X_{1})}F_{k}^{0}(a)da=:C_k,
		$$
		and
		$$\lim_{k\to \infty}C_k=(\lim_{k\to\infty}c_k){\cal H}^{n-4}(B_{r}^{n-4}).$$
		%Set $d^j=\int_{B_{r}^{n-4}(X_{1})}\xi_1^j(a)da$.
		Then by \eqref{eq:consequence of Allard strong constancy lemma} and our choice of $\xi_1^j$, we have
		$$
		\begin{aligned}
			\lim_{k\to\infty}\sup_{j}\left|C_k^j-c_k\int_{B_{r}^{n-4}(X_{1})}\xi_1^j(a)da\right|&\leq
			\lim_{k\to\infty}\sup_{j}\int_{B_{r}^{n-4}(X_{1})}\left|F_{k}^{0}(a)\xi_1^j(a)-c_k\xi_1^j(a)\right|\,da\\
			&\leq \lim_{k\to\infty}\int_{B_{r}^{n-4}(X_{1})}\left|F_{k}^{0}(a)-c_k\right|\,da=0.
		\end{aligned}
		$$
		This implies that as $k$ is large enough, $C_k^j$ is close to $c_k\int_{B_{r}^{n-4}(X_{1})}\xi_1^j(a)da$ uniformly in $j$. In particular, $C_k^j$ converges to $C^j$ as $k\to\infty$ uniformly in $j$. A simple fact in mathematical analysis (see \cite[Lemma 2.2]{Guo-Wang-Xiang-2026-EI for ESbiHM}) gives
		\[
		\lim_{k\to \infty}\lim_{j\to \infty}C_k^j=\lim_{j\to \infty}\lim_{k\to \infty}C_k^j.
		\]
		It follows
		$$
		\lim_{k\to\wq}\int_{B_{r}^{n-4}(X_{1})}F_{k}^{0}(a)da =\Theta_{\nu}^{n-4}(0){\cal H}^{n-4}(B_{r}^{n-4}).
		$$
		%\textbf{(?)}
		%\[
		%\begin{aligned}\lim_{i\to\wq}\int_{B_{r}^{n-4}(X_{1})}F_{i}^{0}(a)da & =\lim_{i\to\wq}\int_{B_{r}^{n-4}(X_{1})\times B^{4}}|\De u_{i}(x)|^{2}\xi_{2}(X_{2})dx\\
		% & =\Theta_{\nu}^{n-4}(0)\int_{B_{r}^{n-4}(X_{1})\times\{0\}}\xi_{2}(0)d{\cal H}^{n-4}\\
		% & =\Theta_{\nu}^{n-4}(0){\cal H}^{n-4}(B_{r}^{n-4}).
		%\end{aligned}
		%\]
		Hence, for any $B_{r}^{n-4}(X_{1})\subset\subset B^{n-4}$,
		$$
		\lim_{k\to\wq}\dashint_{B_{r}^{n-4}(X_{1})}F_{k}^{0}(a)da=\Theta_{\nu}^{n-4}(0).
		$$
		Note that
		$$
		\begin{aligned}\int_{B_{r}^{n-4}(X_{1})}F_{k}^{0}(a)\,da & =\int_{B_{r}^{n-4}(X_{1})\times\left(B_1^{4}\backslash B_{1/2}^{4}\right)}|\nabla Du_{k}(a,X_{2})|^{2}\xi_{2}(X_{2})\,dadX_{2}\\
			& \quad+\int_{B_{r}^{n-4}(X_{1})\times B_{1/2}^{4}}|\nabla Du_{k}(a,X_{2})|^{2}\,dadX_{2}.
		\end{aligned}
		$$
		In the right hand side, the first term vanishes as $k\to\wq$, since $\nabla Du_{k}\to0$ uniformly
		on $B_{r}^{n-4}(X_{1})\times\left(B^{4}_1\backslash B_{1/2}^{4}\right)$.
		Hence
		$$
		\lim_{k\to\wq}\dashint_{B_{r}^{n-4}(X_{1})}\int_{B_{1/2}^{4}}|\nabla Du_{k}(X_{1},X_{2})|^{2}dX_{2}dX_{1}=\Theta_{\nu}^{n-4}(0).
		$$holds for all $B_{r}^{n-4}(X_{1})\subset B_1^{n-4}$. The proof is complete.
	\end{proof}

	In the proof above, we have used the following interpolation inequality.
	Let $Q_r^n=(-r,r)^n$. For an open set $\Omega\subset\R^n$, define the local Morrey norm by
	\begin{equation}
		\|f\|_{M^{2,n-2}(\Omega)}^2
		:=\sup_{x\in\Omega}\sup_{0<r\leq 1}
		r^{2-n}\int_{B_r(x)\cap\Omega}|f(y)|^2dy.
		\label{eq:morrey2}
	\end{equation}
	
	\begin{lemma}[Local Morrey--Sobolev interpolation]\label{lemma: interpolation Morrey Sobolev}
		Let $n\geq2$. There exists a constant $C=C(n)>0$ such that every
		\[
		u\in W^{1,2}(B_1^n)\cap M^{2,n-2}(B_1^n)
		\]
		satisfies
		\begin{equation}
			\|u\|_{L^4(B_{1/2}^n)}^4
			\leq C
			\|u\|_{M^{2,n-2}(B_1^n)}^2
			\|u\|_{W^{1,2}(B_1^n)}^2.
			\label{eq:ball-estimate}
		\end{equation}
		More generally, there is a constant $C=C(n)>0$ such that every
		\[
		u\in W^{1,2}(Q_{3/2}^n)\cap M^{2,n-2}(Q_{3/2}^n)
		\]
		satisfies
		\begin{equation}\label{eq:cube-estimate}
			\|u\|_{L^4(Q_1^n)}^4
			\leq{}C\|u\|_{M^{2,n-2}(Q_{3/2}^n)}^2 
			\times\left(
			\|\nabla u\|_{L^2(Q_{3/2}^n)}^2
			+\|u\|_{L^2(Q_{3/2}^n)}^2
			\right).
		\end{equation}
	\end{lemma}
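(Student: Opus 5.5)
The plan is a cutoff plus a Gagliardo--Nirenberg argument done ball by ball, with the Morrey norm supplying the smallness on each ball and a Vitali covering to add up. I prove the ball estimate \eqref{eq:ball-estimate} first. The cube estimate \eqref{eq:cube-estimate} then follows verbatim by replacing balls with cubes, using the inclusion $Q_1^n\subset\subset Q_{3/2}^n$.

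Fix $\rho=1/4$. Cover $B_{1/2}^n$ by balls $B_\rho(x_i)$ with $x_i\in B_{1/2}^n$ such that the doubled balls $B_{2\rho}(x_i)\subset B_1^n$ have bounded overlap $N(n)$. Since $\rho$ is a fixed dimensional constant, the number of balls is also bounded by $C(n)$. On each ball, take a cutoff $\zeta_i\in C_0^\infty(B_{2\rho}(x_i))$ with $\zeta_i\equiv1$ on $B_\rho(x_i)$ and $|\nabla\zeta_i|\le C/\rho$, and set $w=\zeta_i u\in W^{1,2}_0$.

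The core step is an $L^4$ bound for $w$ that uses the Morrey quantity rather than the Sobolev embedding. The Sobolev embedding only gives $L^{2n/(n-2)}$, which is too weak for $n\ge 5$ unless combined with Morrey. For $n\ge3$, I would use the standard Adams-type inequality
$$
\|w\|_{L^4}^4\le C\,\|w\|_{M^{2,n-2}}^2\,\|\nabla w\|_{L^2}^2 \quad\text{for } w\in W^{1,2}_0 .
$$
To prove it, write $|w|^2$ as a product, $\int |w|^4=\int |w|^2\cdot|w|^2$. Then bound $|w(x)|\le C\,I_1(|\nabla w|)(x)$ by the Riesz potential. The pairing $\int |w|^2\,I_1(|\nabla w|)\,|w|$ is then controlled as follows. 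The measure $\mu=|w|^2dx$ satisfies $\mu(B_r)\le \|w\|_M^2 r^{n-2}$, so by the trace (Adams/Fefferman--Phong) inequality
$$
\int (I_1 f)^2\,d\mu\le C\,\|w\|_{M}^2\,\|f\|_{L^2}^2 .
$$
Applying this with $f=|\nabla w|$ gives $\int |w|^4\le C\int (I_1|\nabla w|)^2|w|^2 \le C\|w\|_M^2\|\nabla w\|_2^2$. For $n=2$ the statement is just Ladyzhenskaya's inequality, since then the $M^{2,0}$ norm is the $L^2$ norm.

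It remains to pass from $w$ back to $u$. We have $\|w\|_{M^{2,n-2}}\le C\|u\|_{M^{2,n-2}(B_1)}$. Radii $r>1$ are harmless because $w$ is supported in a ball of radius $2\rho<1$, and the restriction to radii $r\le1$ in \eqref{eq:morrey2} is compatible with this. Also $\|\nabla w\|_2^2\le C(\|\nabla u\|_{L^2(B_1)}^2+\|u\|_{L^2(B_1)}^2)$, which is bounded by $C\|u\|_{W^{1,2}}^2$. Summing over the boundedly many balls yields \eqref{eq:ball-estimate}.

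The main obstacle is the trace inequality for Morrey-type measures. The proof I would cite is Adams' theorem, or the Fefferman--Phong condition in the form $\mu(B_r)\le Ar^{n-2}$ implying $\int (I_1f)^2d\mu\le CA\|f\|_2^2$, which is valid for $n\ge3$. Alternatively, I could prove it directly by a dyadic decomposition of $I_1$ together with a maximal function argument.
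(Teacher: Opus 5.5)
There is a genuine gap, and it sits exactly where you flagged the main obstacle. Your reduction $\int|w|^4\le C\int (I_1|\nabla w|)^2\,d\mu$ with $\mu=|w|^2dx$ is fine, and the inequality you want for $w$ is true. The step that closes the argument fails, however. You invoke the claim that $\mu(B_r)\le Ar^{n-2}$ for all balls implies $\int(I_1f)^2\,d\mu\le CA\|f\|_{L^2}^2$, and this claim is false.

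Adams' trace theorem characterizes $\|I_1 f\|_{L^q(\mu)}\le C\|f\|_{L^p}$ by a ball condition only for $q>p$. On the diagonal $p=q=2$ the ball condition is necessary but not sufficient. The Fefferman--Phong theorem needs $\mu=V\,dx$ with $\int_{B_r}V^s\le Ar^{n-2s}$ for some $s>1$. For $V=|w|^2$ that is an $L^{2s}$-Morrey bound you do not have.

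Concretely, take the $(n-2)$-dimensional Hausdorff measure on $\R^{n-2}\times\{0\}$. It satisfies the ball condition. Since $|u|\le CI_1|\nabla u|$, your inequality would give $\int_{\R^{n-2}\times\{0\}}|u|^2\,d\mathcal H^{n-2}\le C\|\nabla u\|_{L^2}^2$. This fails: write $x''\in\R^2$ for the normal variables and let $\phi$ be a cutoff. Truncations of $\phi(x)\log\log(1/|x''|)$ have bounded Dirichlet energy but unbounded traces.

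Smoothing the measure to $(\pi\epsilon^2)^{-1}\chi_{\{|x''|<\epsilon\}}\,dx$ keeps $\sup_{x,r}r^{2-n}\mu(B_r(x))$ bounded uniformly in $\epsilon$, while the trace constant blows up. So this route cannot produce a constant depending only on $\|w\|_{M^{2,n-2}}$. Your fallback (dyadic pieces of $I_1$ plus a maximal function) cannot prove the statement either, since it is false.

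The missing idea is to use the Morrey bound to control $w$ itself at large scales, not as a weight. For a.e. $x$ and $\delta>0$,
$$|w(x)|\le|w(x)-w_{B_\delta(x)}|+|w_{B_\delta(x)}|\le C\delta\,\mathcal M|\nabla w|(x)+C\delta^{-1}\|w\|_{M^{2,n-2}}.$$
The first bound comes from telescoping averages and the second from Cauchy--Schwarz. Radii above $1$ are harmless because your $w$ is compactly supported. Choosing $\delta=(\|w\|_{M^{2,n-2}}/\mathcal M|\nabla w|(x))^{1/2}$ gives $|w(x)|^4\le C\|w\|_{M^{2,n-2}}^2(\mathcal M|\nabla w|(x))^2$, and the maximal theorem finishes.

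This is essentially the mechanism behind the paper's proof. The paper quotes Van Schaftingen's interpolation inequality, which bounds $\|v\|_{L^4}^4$ by $(\sup_{x,\,0<r\le1}r^{1-n}\int_{B_r(x)}|v|)^2(\|\nabla v\|_{L^2}^2+\|v\|_{L^2}^2)$. It then passes to the $M^{2,n-2}$ norm by Cauchy--Schwarz and localizes with a single cutoff. The remaining parts of your plan are fine: the cutoff, the transfer of the Morrey norm, the case $n=2$ via Ladyzhenskaya, and the cube version. The covering by balls of radius $1/4$ is superfluous.
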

	\begin{proof}
		Applying \cite[Equation (2.1)]{VanSchaftingen-14} with $k=1$, $\ell=0$, $p=2$, $q=4$, $\rho=1$ and $\lambda=1$, we obtain that for $v\in W^{1,2}(\R^n)$, it holds
		\begin{equation}
			\|v\|_{L^4(\R^n)}^4
			\leq{}C
			\left(
			\sup_{x\in\R^n}\sup_{0<r\leq1}
			r^{1-n}\int_{B_r(x)}|v(y)|d y
			\right)^2
			\quad\times\left(
			\|\nabla v\|_{L^2(\R^n)}^2
			+\|v\|_{L^2(\R^n)}^2
			\right).
			\label{eq:VS-special}
		\end{equation}
		On the other hand, it follows from the Cauchy-Swartz inequality that
		\begin{equation}
			\sup_{x\in\R^n}\sup_{0<r\leq1}
			r^{1-n}\int_{B_r(x)}|v(y)|dy
			\leq C(n)\|v\|_{M^{2,n-2}(\R^n)}.
			\label{eq:M2-to-M1}
		\end{equation}
		Combining \eqref{eq:VS-special} and \eqref{eq:M2-to-M1} gives the global estimate
		\begin{equation}\label{eq:whole-M2}
			\|v\|_{L^4(\R^n)}^4
			\leq C(n)\|v\|_{M^{2,n-2}(\R^n)}^2
			\left(
			\|\nabla v\|_{L^2(\R^n)}^2
			+\|v\|_{L^2(\R^n)}^2
			\right).
		\end{equation}
		
		Next, we show the local estimate via standard cut-off argument. 
		Choose $\chi\in C_c^\infty(B_1^n)$ such that
		\[
		0\leq\chi\leq1,
		\qquad \chi\equiv1\text{ on }B_{1/2}^n,
		\qquad |\nabla\chi|\leq C(n).
		\]
		Define $v=\chi u$ on $B_1$ and extend $v$ by zero to $\R^n$. Since $\chi$ has compact support in $B_1$, the zero extension belongs to $W^{1,2}(\R^n)$.
		
		Because $v=u$ on $B_{1/2}$, we have
		\begin{equation}
			\|u\|_{L^4(B_{1/2})}^4
			\leq\|v\|_{L^4(\R^n)}^4.
			\label{eq:L4-localize}
		\end{equation}
		Note that
		\[
		\nabla v=\chi\nabla u+u\nabla\chi.
		\]
		Using the elementary inequality $|a+b|^2\leq2|a|^2+2|b|^2$, we obtain
		\begin{equation}\label{eq:grad-cutoff}
			\|\nabla v\|_{L^2(\R^n)}^2
			\leq2\|\chi\nabla u\|_{L^2(B_1)}^2
			+2\|u\nabla\chi\|_{L^2(B_1)}^2 
			\leq C(n)\left(
			\|\nabla u\|_{L^2(B_1)}^2+
			\|u\|_{L^2(B_1)}^2
			\right),
		\end{equation}
		and
		\begin{equation}
			\|v\|_{L^2(\R^n)}^2\leq\|u\|_{L^2(B_1)}^2.
			\label{eq:L2-cutoff}
		\end{equation}
		
		It remains to control the global Morrey norm of $v$. Let $x\in\R^n$ and $0<r\leq1$. Since $|v|\leq|u|$ on $B_1$ and $v=0$ outside $B_1$,
		\[
		\int_{B_r(x)}|v|^2
		=\int_{B_r(x)\cap B_1}|\chi u|^2
		\leq\int_{B_r(x)\cap B_1}|u|^2.
		\]
		If $B_r(x)\cap\supp\chi=\varnothing$, the left side is zero. Otherwise choose $z\in B_r(x)\cap\supp\chi\subset B_1$. Then
		\[
		B_r(x)\cap B_1\subset B_{2r}(z)\cap B_1.
		\]
		For $2r\leq1$, the definition \eqref{eq:morrey2} gives
		\begin{align*}
			r^{2-n}\int_{B_r(x)}|v|^2
			&\leq r^{2-n}\int_{B_{2r}(z)\cap B_1}|u|^2\\
			&\leq r^{2-n}(2r)^{n-2}
			\|u\|_{M^{2,n-2}(B_1)}^2\\
			&=2^{n-2}\|u\|_{M^{2,n-2}(B_1)}^2.
		\end{align*}
		If $1/2<r\leq1$, then $r^{2-n}\leq C(n)$ and
		\[
		r^{2-n}\int_{B_r(x)}|v|^2
		\leq C(n)\|u\|_{L^2(B_1)}^2.
		\]
		The radius-one term in the local Morrey norm controls $\|u\|_{L^2(B_1)}^2$ up to a dimensional constant. Hence
		\begin{equation}
			\|v\|_{M^{2,n-2}(\R^n)}
			\leq C(n)\|u\|_{M^{2,n-2}(B_1)}.
			\label{eq:morrey-cutoff}
		\end{equation}
		
		Applying \eqref{eq:whole-M2} to $v$, and then \eqref{eq:L4-localize}, \eqref{eq:grad-cutoff}, \eqref{eq:L2-cutoff}, and \eqref{eq:morrey-cutoff}, we thus obtain
		\[
		\begin{aligned}
			\|u\|_{L^4(B_{1/2})}^4
			&\leq\|v\|_{L^4(\R^n)}^4
			\leq C\|v\|_{M^{2,n-2}(\R^n)}^2
			\left(\|\nabla v\|_2^2+\|v\|_2^2\right)\\
			&\leq C\|u\|_{M^{2,n-2}(B_1)}^2
			\left(\|\nabla u\|_{L^2(B_1)}^2+
			\|u\|_{L^2(B_1)}^2\right)\\
			&=C\|u\|_{M^{2,n-2}(B_1)}^2
			\|u\|_{W^{1,2}(B_1)}^2.
		\end{aligned}
		\]
		This proves \eqref{eq:ball-estimate}.
		
		For the cube estimate, choose $\chi\in C_c^\infty(Q_{3/2}^n)$ such that
		\[
		0\leq\chi\leq1,
		\qquad \chi\equiv1\text{ on }Q_1^n,
		\qquad |\nabla\chi|\leq C(n).
		\]
		Set $v=\chi u$ and extend $v$ by zero to $\R^n$. Exactly as above,
		\[
		\|u\|_{L^4(Q_1)}^4\leq\|v\|_{L^4(\R^n)}^4,
		\]
		\[
		\|\nabla v\|_{L^2(\R^n)}^2+
		\|v\|_{L^2(\R^n)}^2
		\leq C(n)\left(
		\|\nabla u\|_{L^2(Q_{3/2})}^2+
		\|u\|_{L^2(Q_{3/2})}^2
		\right),
		\]
		and the fixed cutoff and the positive distance between $Q_1$ and $\partial Q_{3/2}$ imply
		\[
		\|v\|_{M^{2,n-2}(\R^n)}
		\leq C(n)\|u\|_{M^{2,n-2}(Q_{3/2})}.
		\]
		Using \eqref{eq:whole-M2}, we infer that
		\[
		\begin{aligned}
			\|u\|_{L^4(Q_1)}^4
			&\leq C\|v\|_{M^{2,n-2}(\R^n)}^2
			\left(\|\nabla v\|_{L^2(\R^n)}^2+
			\|v\|_{L^2(\R^n)}^2\right)\\
			&\leq C\|u\|_{M^{2,n-2}(Q_{3/2})}^2
			\left(
			\|\nabla u\|_{L^2(Q_{3/2})}^2+
			\|u\|_{L^2(Q_{3/2})}^2
			\right).
		\end{aligned}
		\]
		This proves \eqref{eq:cube-estimate}.
	\end{proof}

	\subsection{$L^{(2,\wq)}$-estimate of $\sum_{\alpha,\beta=n-3}^{n}H_{\alpha\beta}(u_{k})(X_{1}^{k},X_{2})$}
	With the reduction Lemma \ref{lem: main lemma} at hand, we find that
	$$
	\begin{aligned}
		&\sum_{\alpha,\beta=n-3}^{n}\int_{B_{1/2}^{4}(X_{2}^{k})}\left|H_{\alpha\beta}(u_k)(X_{1}^{k},X_{2})\right|^{2}\,dX_{2}\\
		=&\sum_{\alpha,\beta=n-3}^{n}\Big(\int_{B_{1/2}^{4}(X_{2}^{k})\backslash B_{\de_{k}R}^{4}(X_{2}^{k})}+\int_{B_{\de_{k}R}^{4}(X_{2}^{k})}\Big)\Big|H_{\alpha\beta}(u_k)(X_{1}^{k},X_{2})\Big|^{2}\,dX_{2}\\
		=&\sum_{\alpha,\beta=n-3}^{n}\int_{A(R,k)}\left|H_{\alpha\beta}(u_k)(X_{1}^{k},X_{2})\right|^{2}dX_{2}
		+\sum_{\alpha,\beta=n-3}^{n}\int_{B_{R}^{4}}|H_{\alpha\beta}(v_{k})(0,Y_{2})|^{2}\,dY_2,
	\end{aligned}
	$$
	where $v_{k}(y)=u_k((X_1^{k}, X_2^k)+\de_{k}Y)$ and
	\[
	A(R,k)=B_{1/2}^{4}(X_{2}^{k})\backslash B_{\de_{k}R}^{4}(X_{2}^{k}).
	\]
	By Lin-Riviere's standard reduction procedure (see \cite{Lin-Riviere-2002}), we may assume there is only one bubble at $(0,0)$. In this case, since $v_{k}$ converges to a bubble $v\in H_{\loc}^{2}(\R^{4})\cap C_{\loc}^{2}(\R^{4})$, the energy identity would follow from
	\begin{equation}\label{eq: X-4}
		\lim_{R\to\wq}\lim_{k\to\wq}\sum_{\alpha,\beta=n-3}^{n}\int_{A(R,k)}|H_{\alpha\beta}(u_k)(X_{1}^{k},X_{2})|^{2}\,dX_{2}=0.
	\end{equation}
	
	We will prove \eqref{eq: X-4} by applying the duality between the Lorentz spaces $L^{(2,1)}$ and $L^{(2,\wq)}$. $L^{(2,1)}$-estimate has been done in \eqref{eq: Dj uk L(4/j,1)} and now we need to estimate the $L^{(2,\wq)}$-norm of $H_{\alpha\beta}(u_k)(X_{1}^{k},X_{2})$ on $A(R,k)$.
	
	\begin{lemma}\label{lem: small endpoint}
		For any $R>1$ and $0<\la\ll1/2$ such that $\de_{k}R/\la<1/2$, there holds,
		$$
		\lim_{k\to\wq}\sum_{\alpha,\beta=n-3}^{n}\int_{B_{1/2}^{4}(X_{2}^{k})\backslash B_{\la}^{4}(X_{2}^{k})}|H_{\alpha\beta}(u_k)(X_{1}^{k},X_{2})|^{2}\,dX_{2}=0,
		$$
		and
		$$
		\lim_{R\to\wq}\lim_{k\to\wq}\sum_{\alpha,\beta=n-3}^{n}\int_{B_{\de_{k}R/\la}^{4}(X_{2}^{k})\backslash B_{\de_{k}R}^{4}(X_{2}^{k})}|H_{\alpha\beta}(u_k)(X_{1}^{k},X_{2})|^{2}\,dX_{2}=0.
		$$
	\end{lemma}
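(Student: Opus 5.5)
The two limits are handled separately: the outer annulus is treated in the original coordinates using smooth convergence of $u_k$ away from $P$, and the inner annulus is treated in the blown-up coordinates $v_k$ using their $C^4$ convergence to the bubble $v$ from Theorem \ref{thm:first bubble}.

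\emph{First limit.} Recall that $X_2^k\in B^4_{1/4}(0)$ by Claim 2. The situation we have reduced to (one bubble, located at $(0,0)$) means $X_2^k\to 0$ after passing to a subsequence. Hence for $k$ large the set $B_{1/2}^4(X_2^k)\setminus B_\la^4(X_2^k)$ lies in $B^4_{3/4}\setminus B^4_{\la/2}$. Since $X_1^k\in B^{n-4}_{2/3}$, the points $(X_1^k,X_2)$ with $X_2$ in this annulus stay in a fixed compact subset $K_\la$ of $B_2^n\setminus P$. On $K_\la$ we have $u_k\to c$ in $C^2_{\loc}(B_2^n\setminus P)$, so $Du_k\to 0$ and $D^2u_k\to0$ uniformly. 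Therefore $H_{\alpha\beta}(u_k)=\partial_{\alpha\beta}u_k+A(u_k)(\partial_\alpha u_k,\partial_\beta u_k)\to0$ uniformly on $K_\la$. Integrating over a set of bounded measure gives the first limit.

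\emph{Second limit.} Change variables $X_2=X_2^k+\de_k Y_2$. Because $H_{\alpha\beta}$ scales like a second derivative and the slice is four-dimensional, the integral is scale-invariant and equals
$$\sum_{\alpha,\beta=n-3}^n\int_{B^4_{R/\la}\setminus B^4_R}|H_{\alpha\beta}(v_k)(0,Y_2)|^2\,dY_2.$$
By Claim 3, $v_k\to v$ in $C^4(B_2^{n-4}\times B^4_{R/\la})$ for each fixed $R$ and $\la$. So the $k$-limit equals $\int_{B^4_{R/\la}\setminus B^4_R}|H(v)(0,\cdot)|^2$, which is at most $\int_{\R^4\setminus B^4_R}|\nabla Dv(0,Y_2)|^2dY_2$.

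To send $R\to\infty$ we need $\int_{\R^4}|\nabla Dv(0,\cdot)|^2<\infty$. This comes from \eqref{eq: Claim 4 eq}: $\nabla_jDv=0$ for $j\le n-4$, so $v$ is independent of $X_1$, i.e. $v(Y_1,Y_2)=\omega(Y_2)$ with $\omega:\R^4\to N$ a smooth intrinsic biharmonic map. The bound $\int_{B^n_R}|\nabla Dv|^2\le\Lambda R^{n-4}$ then gives $\int_{B^4_{R}}|\nabla D\omega|^2\le C\Lambda$ uniformly in $R$, so $\omega$ has finite energy. The tail $\int_{\R^4\setminus B^4_R}|\nabla D\omega|^2$ therefore tends to $0$ as $R\to\infty$.

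The main obstacle is justifying that $v$ is genuinely independent of $X_1$. Claim 3 gives $\nabla_j Dv=0$, and from this $\partial_jv$ is parallel with $\int|\partial_jv|^2=\lim\int|\partial_jv_k|^2=0$ (using (1) and strong $W^{1,2}$ convergence), so $\partial_j v=0$. A secondary point is confirming $X_2^k\to0$ in the first limit; this is exactly the one-bubble reduction already assumed in the text.
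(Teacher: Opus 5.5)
Your proof is correct and is essentially the paper's argument. The outer annulus is handled by uniform convergence $u_k\to c$ in $C^2$ away from $P$ together with $X_2^k\to 0$, and the inner annulus by scale invariance of the slice integral, local convergence $v_k\to v$, and finite energy of the bubble; the paper simply asserts $v\in H^2(\R^4)$, so your independence-of-$Y_1$ argument fills a step it skips.

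One justification should be repaired. Property (1) controls only $\nabla_j Dv_k$, not $\int|\partial_j v_k|^2$, so it does not by itself give $\partial_j v=0$. Instead, $\nabla_j Dv=0$ on $\R^n$ makes $|\partial_j v|$ a constant $c_j$. Then $c_j^2\,|B^{n-4}_R\times B^4_R|\le\int_{B^{n-4}_R\times B^4_R}|Dv|^2\le\Lambda R^{n-2}$ from (3), and letting $R\to\infty$ forces $c_j=0$.

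Similarly, $X_2^k\to 0$ follows from the $C^2_{\loc}(B_2^n\setminus P)$ convergence combined with the normalization in Claim 2, rather than from the one-bubble reduction. If $|X_2^k|$ stayed away from $0$, the scaled energy at $X_2^k$, which equals $\varepsilon/c(n)$, would tend to $0$.
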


	\begin{proof} The first equation holds  since $|\nabla Du_{k}|^{2}\to0$ uniformly on $B_{1}^{n}(0)\backslash B_{\la/2}^{4}(0)$ and $X_{2}^{k}\to0$ as $k\to\infty$. 
		
		On the other hand, recall that the map  $v_{k}(Y_{1},Y_{2})=u_{k}(X_{1}^{k}+\de_{k}Y_{1},X_{2}^{k}+\de_{k}Y_{2})$
		converges to $v\in H^{2}(\R^{4},N)$ in $C_{\loc}^{2}(\R^n)$.
		Thus
		$$
		\begin{aligned} 
			&\sum_{\alpha,\beta=n-3}^{n}\int_{B_{\de_{k}R/\la}^{4}(X_{2}^{k})\backslash B_{\de_{k}R}^{4}(X_{2}^{k})}|H_{\alpha\beta}(u_k)(X_{1}^{k},X_{2})|^{2}\,dX_{2}\\
			=&\sum_{\alpha,\beta=n-3}^{n}\int_{\{R\le|Y_{2}|\le R/\la\}}|H_{\alpha\beta}(v_k)(0,Y_{2})|^{2}\,dY_{2}\\
			\to&\int_{\{R\le|Y_{2}|\le R/\la\}}|\nabla Dv|^{2}\,dY_{2},
		\end{aligned}
		$$
		as $k\to\wq$. The second assertion follows from $v\in H^2(\R^{4})$.
	\end{proof}
	Now we prove the following key estimate.
	
	\begin{proposition} \label{prop: key-estimate-1}
		For any $\e>0$, there exists $R_{0}$ sufficiently large such that
		\begin{equation}\label{no-neck-bubble}
			\limsup_{k\to\wq}\sup_{\rho\in(\de_{k}R,1/4)}I(\rho,u_{k},X^{k})\le\e
		\end{equation}
		for $R\ge R_{0}$, where $X^k=(X_1^k,X_2^k)$ and
		$$
		I(\rho,u_{k},X^{k})=\rho^{4-n}\int_{B_{\rho}^{n-4}(X_{1}^{k})}\int_{B_{2\rho}^{4}(X_{2}^{k})\backslash B_{\rho}^{4}(X_{2}^{k})}\left(|\nabla Du_{k}|^{2}+\rho^{-2}|Du_{k}|^{2}\right)\,dX.
		$$
	\end{proposition}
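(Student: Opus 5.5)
We argue by contradiction, following the ``no neck bubble'' argument of Lin--Rivi\`ere \cite{Lin-Riviere-2002} as adapted in \cite{Guo-Wang-Xiang-2026-EI for ESbiHM}. Suppose the statement fails. Then there are $\e_0>0$, a sequence $R_k\to\infty$ (after relabeling $k$) and radii $\rho_k\in(\de_kR_k,1/4)$ with $I(\rho_k,u_k,X^k)\ge\e_0$. By Lemma \ref{lem: small endpoint} and the uniform convergence $u_k\to c$ away from $P$, we must have $\rho_k\to0$. Also $\rho_k/\de_k\to\infty$: otherwise, after rescaling by $\de_k$, the annulus would lie in a fixed compact region where $v_k\to v$ in $C^2_{\loc}$, and the tail of $v$ at infinity in $\R^4$ is small.

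We then rescale by $\rho_k$ and set $w_k(y):=u_k(X^k+\rho_k y)$ on $B^{n-4}_{1/(4\rho_k)}\times B^4_{1/(4\rho_k)}$. These are intrinsic stationary biharmonic maps satisfying the following.
\begin{enumerate}
\item The Morrey bound \eqref{eq: uniform morrey norm} holds uniformly at all scales $r\le 1/(4\rho_k)$.
\item By \eqref{eq: X-2}, which holds at $X_1^k\in F_k$ and is scale invariant in the form used for $v_k$ in item (1) of Section 3, we have $\sup_r r^{4-n}\int_{B^{n-4}_r\times B^4_{1/(4\rho_k)}}\sum_{j\le n-4}|\nabla_jDw_k|^2\to0$.
\item $\int_{B_1^{n-4}\times(B_2^4\setminus B_1^4)}(|\nabla Dw_k|^2+|Dw_k|^2)\ge\e_0$.
\end{enumerate}
Passing to a subsequence, $w_k\rightharpoonup w$ in $H^2_{\loc}(\R^n)$ and $|\nabla Dw_k|^2dx\rightharpoonup|\nabla Dw|^2dx+\mu$. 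Exactly as in Claim 3 in the proof of Theorem \ref{thm:first bubble}, item (2) together with the stationarity identity for $\partial_{a_j}F_k$ shows that $w$ and $\mu$ are invariant in the $X_1$-directions. Hence $w$ is (the trivial extension of) an intrinsic biharmonic map from $\R^4$ with finite energy, smooth off a discrete set, and $\mu=\mathcal{H}^{n-4}\times\mu_2$ with $\mu_2$ a finite sum of atoms in $\R^4$.

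The key point is to locate all the energy of $w$ and $\mu$ at the single point $0\in\R^4$. On annuli $B^4_{2^{-j}}\setminus B^4_{2^{-j-1}}$ with $j\ge1$ fixed, the same scaling argument applied at $\rho_k2^{-j}\in(\de_kR_k,\rho_k)$ forbids concentration away from $0$. We must also exclude energy on $\{|X_2|\ge1\}$. Here we use the single-bubble reduction made before Lemma \ref{lem: small endpoint}: bubbles in the neck would contradict the reduction, so the only possible concentration point of $\mu_2$ is $0$. On $\R^4\setminus\{0\}$ the convergence $w_k\to w$ is then smooth, and $w$ is a smooth nonconstant biharmonic map on $\R^4\setminus\{0\}$ with finite energy. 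By removability of isolated singularities for intrinsic biharmonic maps (the $\e$-regularity of Moser together with finite energy in dimension $4$), $w$ extends to a smooth biharmonic map from $\R^4$ that is nonconstant by item (3). This gives a second bubble at scale $\rho_k\gg\de_k$, contradicting the one-bubble assumption. Before the reduction, this is the iterative bubble-counting step.

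The main obstacle is making the ``one bubble'' reduction rigorous in the supercritical setting. We must choose $X_1^k$ generically (in $E_k\cap F_k$) so that the slice energy at $X_1^k$ controls the $n$-dimensional averaged energy $I$ through the Reduction Lemma \ref{lem: main lemma}, while the neighbouring slices $B^{n-4}_{\rho}(X_1^k)$ stay under control. I would use the Allard strong constancy conclusion \eqref{eq: X-1} at scale $\rho_k$: it says $F_k^0$ is nearly constant in $X_1$, so the slice at $X_1^k$ and the averaged quantity $I$ agree up to $o(1)$. The energy sum $\Theta_\nu^{n-4}(0)$ is finite and each bubble carries at least $\e/c(n)$ of energy, so only finitely many scale-separated bubbles can occur, and the contradiction closes.
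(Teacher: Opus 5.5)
Your proposal is correct and follows essentially the same route as the paper. You argue by contradiction, note $\rho_k\to0$ and $\rho_k/\de_k\to\infty$ (the latter is automatic in your diagonal formulation $\rho_k>\de_kR_k$, $R_k\to\infty$), rescale by $\rho_k$, and use \eqref{eq: X-2} to make the limit invariant in the $X_1$-directions. You then extract a nonconstant intrinsic biharmonic map $\R^4\to N$ at a scale $\rho_k\gg\de_k$, contradicting the one-bubble reduction, which the paper also takes for granted.

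Your treatment of the defect measure is tighter than the paper's dichotomy $\nu_2=0$ versus $\nu_2\neq0$. You allow an atom of $\mu_2$ at $0$, which is where the first bubble $v$ reappears after rescaling by $\rho_k$. You then get nontriviality of $w$ from strong convergence on $B_1^{n-4}\times(B_2^4\setminus B_1^4)$ plus removability at $0$.

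Two phrasings need correcting. Where you speak of locating ``all the energy of $w$'' at $0$ and excluding ``energy on $\{|X_2|\ge1\}$'', you mean concentration of $\mu$, since item (3) puts energy there. Also, it is the one-bubble assumption, not a rescaling at $\rho_k2^{-j}$, that rules out atoms of $\mu_2$ away from $0$.
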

	
	Since $\sup_{\rho\in(\de_{k}R,1/4)}I(\rho,u_{k},X^{k})$ is monotonically decreasing with respect to $R$, according to \eqref{no-neck-bubble} we have 
	$$
	\lim_{R\to\wq}\limsup_{k\to\wq}\sup_{\rho\in(\de_{k}R,1/4)}I(\rho,u_{k},X^{k})=0.
	$$
	
	\begin{proof}
		We prove by contradiction. Suppose there exists $\e_{1}>0$, such that for any sufficiently large $R$, there exist $\rho_{k}\in(\de_{k}R,1/4)$ so that
		\begin{equation}\label{eq: X-6}
			I(\rho_{k},u_{k},X^{k})=\sup_{\rho\in(\de_{k}R,1/4)}I(\rho,u_{k},X^{k})\ge\e_{1}.
		\end{equation}
		
		\noindent\textbf{Claim.} There hold both $\rho_{k}\to0$ and $\de_{k}R/\rho_{k}\to0$ as $k\to\wq$. 
		
		Indeed, if $\rho_{k}\ge c>0$ as $k\to\wq$, then $$B_{2\rho_{k}}^{4}(X_{2}^{k})\backslash B_{\rho_{k}}^{4}(X_{2}^{k})\subset B^{4}(0)\backslash B_{c/2}^{4}(0),$$
		which  implies $(|\nabla Du_{k}|^{2}+\rho_{k}^{-2}|Du_{k}|^{2})$
		converges uniformly to $0$ on $B_{2\rho_{k}}^{4}(X_{2}^{k})\backslash B_{\rho_{k}}^{4}(X_{2}^{k})$.
		This implies that $I(\rho_{k},u_{k},X^{k})\to0$ as $k\to\wq$, which contradicts to  (\ref{eq: X-6}).
		
		Moreover, since $\rho_{k}\ge\de_{k}R$, if $\rho_{k}/(\de_{k}R)\to c\in[1,\wq)$,
		then by considering $v_{k}(Y)=u_{k}(X_1^{k}+\de_{k}Y_1,X_2^{k}+\de_{k}Y_2)$ we deduce
		$$
		\begin{aligned}
			\e_{1}  \le&\left(\frac{\rho_{k}}{\de_{k}}\right)^{4-n}\int_{B_{\rho_{k}/\de_{k}}^{n-4}}\int_{B_{2\rho_{k}/\de_{k}}^{4}\backslash B_{\rho_{k}/\de_{k}}^{4}}
			\Big(|\nabla Dv_{k}|^{2}+\left(\frac{\rho_{k}}{\de_{k}}\right)^{-2}|Dv_{k}|^{2}\Big)\,dY\\
			\to&(cR)^{4-n}\int_{B_{cR}^{n-4}}\int_{B_{2cR}^{4}\backslash B_{cR}^{4}}\Big(|\nabla Dv|^{2}+\left(cR\right)^{-2}|Dv|^{2}\Big)\,dY\\
			=&\om_{n-4}\int_{B_{2cR}^{4}\backslash B_{cR}^{4}}\Big(|\nabla Dv|^{2}+\left(cR\right)^{-2}|Dv|^{2}\Big)\,dY_{2}.
		\end{aligned}
		$$
		Since $v\in H^{2}(\R^{4})$, this converges to zero as $R\to\wq$, which contradicts \eqref{eq: X-6} again. The claim is proved.
		~\\
		
		Now, define $w_{k}(Y)=u_{k}(X^{k}+\rho_{k}Y)$ for $|Y|\le1/(2\rho_{k})\to\wq$.
		Then the estimate (\ref{eq: uniform morrey norm}) implies that,
		for any $r>0$ and all $k\gg1$,
		\begin{equation}\label{eq: X-7}
			\begin{aligned} & r^{4-n}\int_{B_{r}^{n}}\Big(|\nabla Dw_{k}|^{2}+r^{-2}|Dw_{k}|^{2}\Big)\,dY\\
				=&(\rho_{k}r)^{4-n}\int_{B^{n}_{\rho_{k}r}(X^{k})}\Big(|\nabla Du_{k}|^{2}+(\rho_{k}r)^{-2}|Du_{k}|^{2}\Big)\,dY\le C,
			\end{aligned}
		\end{equation}
		which means $w_{k}$ is locally uniformly bounded in $H^{2}$. Moreover, \eqref{eq: X-6} is equivalent to
		\begin{equation}\label{eq: normalization}
			\int_{B_1^{n-4}}\int_{B_{2}^{4}\backslash B_1^{4}}\left(|\nabla Dw_{k}|^{2}+|Dw_{k}|^{2}\right) dY \ge \e_{1}
		\end{equation}
		for all $k\gg1$. Hence, up to a subsequence if necessary, we assume that
		$$
		w_{k}\wto w_{\wq} \ {\rm{in}}\ W_{\loc}^{2,2}(\R^{n})\ \ {\rm{ and \ }} \ \ |\nabla Dw_{k}|^{2}dY\wto|\nabla Dw_{\wq}|^{2}dY+\nu_{2}$$
		as weak convergence of Radon measures for some nonnegative defect measure
		$\nu_{2}$.
		
		If $\nu_{2}=0$, then we have $w_{i}\to w_{\wq}$ strongly in $W_{\loc}^{2,2}(\R^{n})$. By  \eqref{eq: X-2}, for any $r>0$ we have
		$$
		r^{4-n}\int_{B_{r}^{n-4}\times B_{1/\rho_{k}}^{4}}\sum_{j=1}^{n-4}|\na\pa_{j}w_{k}|^{2}=(\rho_{k}r)^{4-n}\int_{B_{\rho_{k}r}^{n-4}\times B_{1}^{4}}\sum_{j=1}^{n-4}|\na\pa_{j}u_{k}|^{2}\to0
		$$
		as $k\to\wq$. By \eqref{eq: normalization}, $w_{\wq}$ is a nontrivial map. Hence we conclude from these two facts and the estimate \eqref{eq: X-7} that $w_{\wq}(Y)=w_{\wq}(Y_{2}):\R^{4}\to N$ is a nontrivial, smooth intrinsic biharmonic mapping from $\R^{4}$ into $N$ with a bounded energy.
		
		If $\nu_{2}$ is nontrivial, then we may repeat the same blow up procedure to find another bubble.
		
		Both cases contradict to the assumption that there is only one bubble. The proof is complete.
	\end{proof}
	
	Now we are ready to prove Theorem \ref{thm: main results}.
	\begin{proof}[Proof of Theorem \ref{thm: main results}]
		By Proposition \ref{prop: key-estimate-1} and the $\varepsilon$-energy regularity theorem, for $\e>0$ sufficiently small, we infer that
		$$
		|X_{2}-X_{2}^{k}|^{2}|D^{2}u_{k}(X_{1}^{k},X_{2})|\le C\e
		$$
		on  $\{X_{1}^{k}\}\times \big(B_{1/4}^{4}\backslash B_{\delta_kR}^{4}\big)$. This implies that the set
		$$A_{\la}:=\{x\in A(R,k):|\na^{2}u_{k}(X_{1}^{k},X_{2})|>\la\}$$
		satisfies
		$$
		|X_{2}-X_{2}^{k}|\le\frac{C\sqrt{\e}}{\sqrt{\la}}\qquad\forall\,X_{2}\in A_{\la}.
		$$
		This implies $A_{\la}\subset B^{4}_{C\e/\sqrt{\la}}(X_{2}^{k})$. Thus
		$$
		\|D^{2}u_{k}\|_{L^{(2,\wq)}(\{X_{1}^{k}\}\times A(R,k))}=\sup_{\la>0}\la|A_{\la}|^{1/2}\le C\e^2.
		$$
		By the duality between  $L^{(2,1)}$ and $L^{(2,\infty)}$, we have
		$$
		\begin{aligned}
			&\quad \sum_{\alpha,\beta=n-3}^n \int_{A(R,k)}|H_{\alpha\beta}(u_{k})(X_{1}^{k},X_{2})|^{2}\,dX_{2}\\
			&\le C\|D^{2}u_{k}\|_{L^{(2,1)}(\{X_{1}^{k}\}\times A(R,k))}^2\|D^{2}u_{k}\|_{L^{(2,\wq)}(\{X_{1}^{k}\}\times A(R,k))}^2
			\le C\e^2.
		\end{aligned}
		$$
		Since $\e>0$ is arbitrary, \eqref{eq: X-4} follows and hence the proof is complete.
	\end{proof}

\end{document}